\documentclass[pdflatex,sn-mathphys-num]{sn-jnl}

\usepackage{graphicx}%
\usepackage{multirow}%
\usepackage{amsmath,amssymb,amsfonts}%
\usepackage{amsthm}%
\usepackage{mathrsfs}%
\usepackage[title]{appendix}%
\usepackage{xcolor}%
\usepackage{textcomp}%
\usepackage{manyfoot}%
\usepackage{booktabs}%
\usepackage{algorithm}%
\usepackage{algorithmicx}%
\usepackage{algpseudocode}%
\usepackage{listings}%

\usepackage[mathscr]{eucal}
\usepackage{relsize}
\usepackage{makecell}
\usepackage{verbatim}

\theoremstyle{thmstyleone}%
\newtheorem{theorem}{Theorem}
\newtheorem{proposition}[theorem]{Proposition}%

\theoremstyle{thmstyletwo}%
\newtheorem{example}{Example}%

\theoremstyle{thmstylethree}%
\newtheorem{definition}{Definition}%

\newtheorem{corollary}{Corollary}%
\newtheorem{lemma}{Lemma}%

\begin{document}

\title[Magnetic geodesics]{Homogeneity of magnetic geodesics in the Heisenberg group}


\author[1]{\fnm{Jun-ichi} \sur{Inoguchi}}\email{inoguchi@math.sci.hokudai.ac.jp}

\author*[2]{\fnm{Marian Ioan} \sur{Munteanu}}\email{marian.ioan.munteanu@gmail.com}

\affil[1]{\orgdiv{Department of Mathematics}, \orgname{Hokkaido University},
 \orgaddress{\city{Sapporo}, \postcode{060-0810},  \country{Japan}}}

\affil[2]{\orgdiv{Department of Mathematics}, \orgname{University  Alexandru Ioan Cuza Iasi},
 \orgaddress{\street{Bd. Carol I, n. 11}, \city{Iasi}, \postcode{700506}, \country{Romania}}}
\equalcont{These authors contributed equally to this work.}



\abstract{
We prove that magnetic geodesics in the $3$-dimensional 
Heisenberg group derived from the canonical 
contact structure are homogeneous. 
}

\date{\today}
\keywords{magnetic geodesics, oscillator group, Heisenberg group, homogeneous geodesics}
\pacs[MSC Classification]{Primary 53B25; Secondary 53C22, 53C43, 53D25}

\maketitle

\section{Introduction}
On an oriented Riemannian $3$-manifold 
$(M^3,g,dv_g)$, a (static) \emph{magnetic field} is 
realized as a divergence free vector field $B$. 
A \emph{magnetic geodesic} in $M$ under the influence 
of $B$ with charge $q$ is a curve $\gamma$ in $M$ obeying 
the \emph{Lorentz equation}
\[
\nabla_{\dot{\gamma}}\dot{\gamma}=qB\times \dot{\gamma}.
\]
Here $\nabla$ is the Levi-Civita connection of $g$ and 
$\times$ is the cross product determined by the volume element $dv_g$. 
The charge $q$ is a constant. 
In case $B=0$ or $q=0$, magnetic geodesics are 
nothing but geodesics. 
From the Lorentz equation, one can see that every magnetic geodesic is of constant speed. 
Via the metric $g$ and the volume element $dv_g$, 
a magnetic field $B$ is identified with a closed $2$-form $F$. 
Take the skew-adjoint endmorphism field $J$ 
metrically equivalent to $F$, then 
the Lorentz equation is rewritten as
\[
\nabla_{\dot{\gamma}}\dot{\gamma}=qJ \dot{\gamma}.
\]
This ODE is valid for Riemannian manifolds of arbitrary dimension. 
Based on this observation, the notion of the magnetic field 
is extended to Riemannian manifolds of arbitrary dimension as a 
closed $2$-form.

As is well known, the equation of geodesics on a Riemannian manifold 
$M$ is reformulated as a Hamiltonian system 
on the tangent bundle $TM$ of $M$ whose Hamiltonian is 
the kinetic energy $\mathrm{E}$. The solutions 
of the Hamiltonian system determined by the 
geodesic equation are called 
\emph{geodesic flows}.

By perturbing 
the canonical symplectic form $\varOmega_{TM}$ of 
$TM$ by using the magnetic field $F$ in $M$, 
one obtains a magnetized symplectic form $\varOmega_F
=\varOmega_{TM}+q\pi^{*}F$, 
where $\pi:TM\to M$ is the projection. 
One can see that the Lorentz equation is reformulated as 
a Hamiltonian system $(TM,\varOmega_F,\mathrm{E})$. 
The solutions of this Hamiltonian system are often called 
\emph{magnetic geodesic flows} (see \textit{e.g.}, \cite[Appendix C]{IM232}).
 
Magnetic geodesics have been received much attention not only from 
differential geometers but also from researchers of dynamical systems (see \textit{e.g.} \cite{AnSi, Arnold1961, Arnold1986, DGJ}) 
as well as from symplectic geometers \cite{ABM,BM}.

Particular examples of magnetic field in dimension $3$ are \emph{uniform magnetic fields} ($\nabla B=0$) 
and \emph{Killing magnetic fields} ($\pounds_{B}g=0$). 
Needless to say, the Euclidean $3$-space $\mathbb{E}^3$ is one of the model spaces of Thurston's 
$3$-dimensional geometry \cite{Thurston}.
All the eight model spaces have non trivial Killing vector fields. Indeed, 
the dimension $d$ of the Lie algebra of Killing vector fields on a model space 
satisfies $d=3$, $4$ or $6$. Moreover, the model spaces are 
represented as homogeneous Riemannian $3$-space 
$G/H$, where $G$ is the largest connected Lie group of isometries. 
In particular, except the model space $\mathrm{Sol}_3$ of \emph{solvegeometry}, 
all the model spaces are \emph{naturally reductive homogeneous $3$-spaces}.

In an oriented homogeneous Riemannian 
space $M=(G/H,g,dv_g)$ equipped with a magnetic 
field $F$, one can consider \emph{homogeneous magnetic geodesics}. 
A magnetic geodesic under the  influence of $F$ is said to be \emph{homogeneous} if it is an orbit 
of a one-parameter subgroup of $G$. Obviously, homogeneous magnetic geodesics are 
homogeneous geodesics in the sense of Kowalski-Vanhecke \cite{KV} when $F=0$ or $q=0$.  
Homogeneous Riemannian spaces, all of whose geodesics are homogeneous with respect to the largest connected group of isometries 
are called \emph{Riemannian g.~o.~spaces} \cite{KV}. 
After the publication of the seminal paper \cite{KV}, Riemannian g. o. spaces 
are investigated very actively. See 
Arvanitoyeorgos's survey \cite{Arv} and the monograph \cite{BN} due 
to Berestovskii and Nikonorov. 

Concerning magnetic geodesics on 
homogeneous Riemannian spaces, 
Bolsinov and Jovanovi{\'c} \cite{BJ} proved

{\it
 the 
homogeneity of magnetic geodesics on 
compact \emph{normal homogeneous spaces}.  }

\noindent
They showed that

{\it
 every magnetic geodesic starting at the origin under the
influence of the standard invariant magnetic field 
of compact \emph{normal homogeneous spaces}
is homogeneous }
(see \cite[Remark~1]{BJ}). Note that compact normal homogeneous spaces are naturally reductive. 
It would be interesting to generalize Bolsinov-Jovanovi{\'c}'s
result to naturally reductive homogeneous spaces, or more generally, to 
Riemannian g.~o.~spaces.

It should be noted that Bimmermann
and Maier computed the Hofer-Zender capacity of certain lens spaces 
\cite{BM}. In the work
\cite{BM}, magnetic geodesics on the lens space $L(p;1)=
\mathbb{S}^3/\mathbb{Z}_p$ induced by the 
\emph{standrad contact magnetic
field} on the unit $3$-sphere 
$\mathbb{S}^3$ play a crucial role (see also \cite{ABM}). 
Moreover, contact magnetic fields on contact $3$-manifolds are used to
construct certain spacetimes in general relativity \cite{IsMa,KIKM}.

Let us return  to Thurston geometry. 
The seven model spaces, other than $\mathrm{Sol}_3$, are 
naturally reductive, and hence they are Riemannian g.~o.~spaces. 
Thus, every geodesic in these seven model spaces is
homogeneous. 
Among those seven model spaces, 
the only normal homogeneous space is the $3$-sphere $\mathbb{S}^3$. 
The $3$-sphere is a typical example of contact 
manifold. 
As a first attempt to generalize Bolsinov-Jovanovi{\'c}'s 
result to naturally reductive homogeneous spaces, we concentrate 
our attention to the following two model spaces: the Heisenberg group $\mathrm{Nil}_3$ 
and the universal covering $\widetilde{\mathrm{SL}}_2\mathbb{R}$ 
of the special linear group $\mathrm{SL}_2\mathbb{R}$. 
These two spaces are naturally reductive and 
have standard contact structures.

The parametric expression 
of geodesics of the Heisenberg group $\mathrm{Nil}_3$ is well known (see \textit{e.g.}, \cite{IKOS1,Mare,Mu}).
More generally, geodesics of nilmanifolds 
have been studied in detail. 
See \cite{EGM,KOR,Ovando}. 
Epstein, 
Gornet and Mast \cite{EGM} 
determined periodic magnetic geodesics in compact 
Heisenberg nilmanifolds. 
Ovando and Sublis \cite{OS23,OS} studied certain 
magnetic geodesics in $2$-step nilpotent Lie groups.

The identity component of the \emph{largest} isometry group
of $\mathrm{SL}_2\mathbb{R}$ [resp. $\mathrm{Nil}_3$] is 
$\mathrm{SL}_2\mathbb{R}\times\mathrm{SO}(2)$ 
[resp. $\mathrm{Nil}_3\ltimes \mathrm{U}(1)$]. 
The Lie algebra of $\mathrm{Nil}_3\ltimes \mathrm{U}(1)$ is called the \emph{oscillator algebra}. 
These two spaces are 
represented as \emph{naturally reductive homogeneous spaces} 
$(\mathrm{SL}_2\mathbb{R}\times\mathrm{SO}(2))/\mathrm{SO}(2)$ and 
$(\mathrm{Nil}_3\ltimes \mathrm{U}(1))/\mathrm{U}(1)$. 
Recently, we proved the homogeneity of contact magnetic geodesics 
in the naturally reductive homogeneous space 
$(\mathrm{SL}_2\mathbb{R}\times\mathrm{SO}(2))/\mathrm{SO}(2)$ \cite{IM23}.
Thus, the homogeneity of contact magnetic geodesics in 
$\mathrm{Nil}_3$ would be expected.

Both  
$\mathrm{SL}_2\mathbb{R}$ and $\mathrm{Nil}_3$ have the 
naturally reductive representation 
of the form 
$(G\times K)/K$ where $G=\mathrm{SL}_2\mathbb{R}$ or $\mathrm{Nil}_3$ and 
$K$ is the rotation group 
$\mathrm{SO}(2)\cong \mathrm{U}(1)$. Although 
$K$ is a subgroup of $G$ in the case $G=\mathrm{SL}_2\mathbb{R}$, 
it is \emph{not} a subgroup of $G$ in the case $G=\mathrm{Nil}_3$.

In the proof of homogeneity of contact magnetic geodesics 
in $(\mathrm{SL}_2\mathbb{R}\times\mathrm{SO}(2))/\mathrm{SO}(2)$, 
the simplicity of $\mathrm{SL}_2\mathbb{R}$ is crucial. More precisely, 
the non-degeneracy of the Killing form is the essential tool \cite{IM23} (see \cite{IM232} for the 
compact simple counterpart of $(\mathrm{SL}_2\mathbb{R}\times\mathrm{SO}(2))/\mathrm{SO}(2)$). 
On the other hand, $\mathrm{Nil}_3$ is nilpotent and  
has vanishing Killing form. Thus we need to develop \emph{new} 
strategy for the proof of homogeneity of contact magnetic geodesics.

The purpose of the present article is to prove the homogeneity of magnetic geodesics
in the naturally reductive homogeneous space 
$\mathrm{Nil}_3=(\mathrm{Nil}_3\ltimes \mathrm{U}(1))/\mathrm{U}(1)$ with respect to the 
(standard) contact magnetic field.

\section{Static magnetism in $3$-dimensional Riemannian geometry}
\label{sec:1}

\subsection{Landau-Hall functional}
Let $(M,g)$ be a Riemannian manifold. 
Via the Riemannian metric $g$, the 
canonical symplectic form of the cotangent bundle $T^{*}M$ 
induces a symplectic form $\varOmega_{TM}$ on the tangent bundle $TM$. 
As is well known, 
the \emph{geodesic flow} is the Hamiltonian 
flow of the Hamiltonian system $(TM,\varOmega,\mathrm{E})$ whose Hamiltonian is 
the kinetic energy 
\[
\mathrm{E}(p;v)=\frac{1}{2}g_{p}(v,v).
\]
A static \emph{magnetic field} on $M$ is a closed 
$2$-form $F$ on $M$. 
The \emph{magnetic flow} is the Hamiltonian flow 
on the Hamiltonian system $(TM,\varOmega_{TM}+q\pi^{*}F,\mathrm{E})$. Here 
$\pi:TM\to M$ is the projection and $q$ is a constant (called the 
\emph{charge} or \emph{strength}). The perturbed symplectic form $\varOmega_{F}:=\varOmega_{TM}+q\pi^{*}F$ is 
called the \emph{magnetized symplectic form}.

The \emph{Lorentz force} $J$ of $F$ is an endomorphism 
field $J$ on $M$ defined by
\[
g(JX,Y)=F(X,Y).
\]
Then the orbits of magnetic flow on the base manifold $M$ takes the form:
\begin{equation}\label{eq:1.1}
\nabla_{\dot{\gamma}}\dot{\gamma}=qJ \dot{\gamma},
\end{equation}
where $\nabla$ is the Levi-Civita connection. See {\it e.g.} \cite{AnSi,Arnold1961,Arnold1986} (see also \cite[Appendix C]{IM232}).

Clearly, when $F=0$ or $q=0$, magnetic flow reduces to geodesic flow.
The ordinary differential equation \eqref{eq:1.1} is called the 
\emph{Lorentz equation}. A curve satisfying the Lorentz equation is called 
a \emph{magnetic geodesic} (or \emph{magnetic trajectory})
under the influence of the magnetic field $F$.

One can see that magnetic geodesics are of constant speed. 

Magnetic geodesics also has a \emph{Lagrangian formulation}.  
Indeed, when $F$ has a globally defined potential $1$-form $A$, 
then magnetic geodesics are characterized as a 
critical point of the \emph{Landau-Hall functional}: 
\[
\mathrm{LH}(\gamma)=
\mathrm{E}(\gamma)+q\int A(\dot{\gamma}(s))\,ds,
\]
where $\mathrm{E}$ is the kinetic energy, as before.

\subsection{A conservation law}
Assume that $M$ is $3$-dimensional and oriented by 
the volume element $dv_g$. Then a magnetic field $F$ is identified with a 
divergence free vector field $B$ 
via the correspondence
\[
F=dv_{g}(B,\cdot,\cdot).
\]
Then 
the Lorentz equation is rewritten as 
\[
\nabla_{\dot{\gamma}}\dot{\gamma}=qB\times \dot{\gamma},
\]
where $\times$ is the cross product 
induced from the volume element $dv_g$. 
For instance, a parallel vector field $B$ is a magnetic 
field which is called a \emph{uniform magnetic field}.  
Any Killing vector field is also a magnetic field 
called a \emph{Killing magnetic field}.

Killing magnetic fields satisfy the following 
conservation law:
\begin{proposition}\label{prop:conservation}
Along a magnetic geodesic $\gamma(s)$ 
under the influence of a Killing magnetic field $B$, 
the function $g(\dot{\gamma},B)$ is constant. 
\end{proposition}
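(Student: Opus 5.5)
We differentiate the function $g(\dot{\gamma},B)$ along $\gamma$ and show that the derivative vanishes. Since $\nabla$ is metric,
\[
\frac{d}{ds}\,g(\dot{\gamma},B)=g(\nabla_{\dot{\gamma}}\dot{\gamma},B)+g(\dot{\gamma},\nabla_{\dot{\gamma}}B).
\]
We treat the two terms separately.

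For the first term we use the Lorentz equation $\nabla_{\dot{\gamma}}\dot{\gamma}=qB\times\dot{\gamma}$. This gives
\[
g(\nabla_{\dot{\gamma}}\dot{\gamma},B)=q\,g(B\times\dot{\gamma},B)=q\,dv_g(B,\dot{\gamma},B)=0.
\]
Here we use the defining property $g(X\times Y,Z)=dv_g(X,Y,Z)$ of the cross product induced by $dv_g$, together with the skew-symmetry of $dv_g$.

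For the second term we use the Killing condition $\pounds_B g=0$. Written via the Levi-Civita connection, it says that $X\mapsto\nabla_X B$ is skew-adjoint, that is, $g(\nabla_X B,Y)+g(X,\nabla_Y B)=0$ for all $X,Y$. Taking $X=Y=\dot{\gamma}$ gives $g(\dot{\gamma},\nabla_{\dot{\gamma}}B)=0$.

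Both terms vanish, so the derivative is zero and $g(\dot{\gamma},B)$ is constant along $\gamma$.

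The only step that needs any care is the second one: rewriting $\pounds_B g=0$ as skew-adjointness of $\nabla B$. This follows from $(\pounds_B g)(X,Y)=g(\nabla_X B,Y)+g(X,\nabla_Y B)$, which holds because $\nabla$ is torsion-free and metric. Note also that the argument uses $\nabla B$ only at points of the curve, so $B$ enters only through the Killing condition, while $q$ enters only through the first term, where it is harmless.
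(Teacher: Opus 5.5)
Your proof is correct and complete. The paper states this conservation law without proof, and your two-term computation is the standard argument it takes for granted: the Lorentz force $qB\times\dot{\gamma}$ is orthogonal to $B$, and $g(\dot{\gamma},\nabla_{\dot{\gamma}}B)=0$ because $\nabla B$ is skew-adjoint. The paper checks the statement only in the special case $B=X_{\alpha}=E_3$ on $\mathrm{Nil}_3$: there the $E_3$-component of \eqref{eq:accel} is compared with that of $qJ\dot{\gamma}$, which is zero, giving $(\cos\theta)^{\cdot}=0$, which is your argument specialized to that field.
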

Thus $g(\dot{\gamma},B)$ is a first integral of the 
Hamiltonian system $(TM,\varOmega_{F},\mathrm{E})$.


\section{The Heisenberg group}\label{sec:3}
\subsection{Heisenberg group}
Let us denote by $\omega$ the 
\emph{canonical symplectic form} 
of $\mathbb{R}^2(x,y)$, that is, $\omega=dx\wedge dy$. 
The $3$-dimensional Heisenberg group 
associated to $(\mathbb{R}^2,\omega)$ is realized as 
the Cartesian $3$-space $\mathbb{R}^3(x,y,z)$ equipped with 
the multiplication:
\[
(x_1,y_1,z_1)(x_2,y_2,z_2)=
\left(x_1+x_2,y_1+y_2,z_1+z_2+\frac{1}{2}(x_1y_2-x_2y_1)\>\right).
\]
Note that the third component of $(x_1,y_1,z_1)(x_2,y_2,z_2)$ is rewritten as
\[
z_1+z_2-\frac{1}{2}\omega((x_1,y_1),(x_2,y_2)).
\]
One can see that the Heisenberg group 
is isomorphic to the linear Lie group
\[
\left\{
\left.
\left(
\begin{array}{ccc}
1 &  x& z+(xy)/2 \\
0 & 1 &y \\
0 & 0 &1
\end{array}
\right)\>\right|\>x,y,z
\in\mathbb{R}
\right\}.
\]
The Lie algebra of the Heisenberg group 
is isomorphic to
\[
\left\{
\left.
\left(
\begin{array}{ccc}
0 &  u& w \\
0 & 0 &v\\
0 & 0 &0
\end{array}
\right)\>\right|\>u,v,w
\in\mathbb{R}
\right\}
\]
and it is called the \emph{Heisenberg algebra}. 
The basis 
\[
E_1=\left(
\begin{array}{ccc}
0 &  1& 0 \\
0 & 0 &0\\
0 & 0 &0
\end{array}
\right),
\>
E_2=\left(
\begin{array}{ccc}
0 &  0& 0 \\
0 & 0 &1\\
0 & 0 &0
\end{array}
\right),
\>
E_3=
\left(
\begin{array}{ccc}
0 &  0& 1 \\
0 & 0 &0\\
0 & 0 &0
\end{array}
\right)
\]
satisfies the commutation relations:
\[
[E_1,E_2]=E_3,
\quad 
[E_2,E_3]=
[E_3,E_1]=0.
\]
Thus, the center $\mathfrak{z}$ of the Heisenberg algebra is 
$\mathfrak{z}=\mathbb{R}E_3$. Moreover, the Heisenberg 
algebra is $2$-step nilpotent.

The vectors $E_1$, $E_2$ and 
$E_3$ induce left 
invariant vector fields:
\begin{equation}\label{eq:2.1}
E_1=\frac{\partial}{\partial x}
-\frac{y}{2}\frac{\partial}{\partial z},
\quad 
E_2=\frac{\partial}{\partial y}
+\frac{x}{2}\frac{\partial}{\partial z},
\quad 
E_3=\frac{\partial}{\partial z}
\end{equation}
on the Heisenberg group. 

\subsection{Contact structures and CR-structures}
Here we recall the notion of contact $3$-manifold
(see \textit{e.g.}, \cite{Geiges}). 

\begin{definition}
{\rm A $1$-form $\alpha$ on a $3$-manifold $M$ is 
said to be a \emph{contact form} if it satisfies 
$\alpha\wedge d\alpha\not=0$. 
}
\end{definition}
A \emph{contact structure} of a $3$-manifold $M$ is an oriented non-integrable plane field $\xi$ of $TM$, 
\textit{i.e.}, there exists a locally defined contact form $\alpha$ 
with $\mathrm{Ker}\,\alpha=\xi$ and 
$d\alpha$ agrees with the orientation of $\xi$ around any point of $M$.

A $3$-manifold $M$ equipped with a contact structure $\xi$ is called a 
\emph{contact $3$-manifold}. 
It is known that if the contact $3$-manifold $M=(M,\xi)$ is orientable, then there exists a 
globally defined contact form $\alpha$, which annihilates $\xi$.

\begin{proposition}
A contact $3$-manifold $(M,\xi)$ with a globally defined 
contact form $\alpha$ is oriented by the 
volume element $dv_{\alpha}=\alpha\wedge d\alpha$. 
There exists a unique vector field $X_{\alpha}$ 
satisfying 
\[
\alpha(X_{\alpha})=1,
\quad 
d\alpha(X_{\alpha},\cdot)=0.
\]
The vector field $X_{\alpha}$ is called the 
Reeb vector field of the contact form $\alpha$.
\end{proposition}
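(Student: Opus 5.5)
The plan is to work with the linear algebra of the forms at a single point and then extend to the whole manifold. There are three claims to establish: that $\alpha\wedge d\alpha$ is a volume form, and the existence and the uniqueness of $X_\alpha$. Orientation is the easy part. The contact condition $\alpha\wedge d\alpha\neq 0$ says this $3$-form vanishes nowhere, so it is a volume element. It defines the orientation of $M$, and this orientation is compatible with the orientation of $\xi$ given by $d\alpha|_{\xi}$.

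For the Reeb field I will first show, at each point $p$, that the radical
\[
\mathcal{R}_p=\{v\in T_pM \mid d\alpha_p(v,\cdot)=0\}
\]
is exactly one-dimensional. Since $d\alpha_p$ is a skew-symmetric bilinear form on the $3$-dimensional space $T_pM$, its rank is even, so it is $0$ or $2$. The rank cannot be $0$: then $d\alpha_p=0$, which forces $(\alpha\wedge d\alpha)_p=0$ and contradicts the contact condition. Hence the rank is $2$ and $\dim\mathcal{R}_p=1$.

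Next I will show $\alpha_p$ does not vanish on $\mathcal{R}_p$. Take a nonzero $v\in\mathcal{R}_p$ and complete it to a basis $v,e_1,e_2$ of $T_pM$. Expanding the wedge product, every term containing $d\alpha_p(v,\cdot)$ vanishes, leaving
\[
(\alpha\wedge d\alpha)_p(v,e_1,e_2)=\alpha_p(v)\,d\alpha_p(e_1,e_2).
\]
The left side is nonzero, so $\alpha_p(v)\neq0$. Therefore $\mathcal{R}_p$ contains a unique vector $X_\alpha(p)$ with $\alpha_p(X_\alpha(p))=1$. This gives pointwise existence and uniqueness at the same time, since any vector satisfying the two defining conditions must lie in $\mathcal{R}_p$ and be normalized.

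The remaining step, and the only one I expect to need care, is smoothness of $p\mapsto X_\alpha(p)$. The plan is to consider the bundle map
\[
\Phi:TM\to T^*M,\qquad v\mapsto \alpha(v)\,\alpha+d\alpha(v,\cdot).
\]
$\Phi$ is injective at each point. Indeed, if $\Phi(v)=0$, evaluating on $v$ gives $\alpha(v)^2=0$, since $d\alpha(v,v)=0$. Then $d\alpha(v,\cdot)=0$, so $v\in\mathcal{R}_p$ with $\alpha_p(v)=0$, which forces $v=0$ by the previous step. Being an injective smooth bundle map between bundles of equal rank, $\Phi$ is a smooth bundle isomorphism. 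The Reeb field is then $X_\alpha=\Phi^{-1}(\alpha)$: we have $\Phi(X_\alpha)=\alpha$ for the pointwise vector constructed above, so $X_\alpha$ is smooth.
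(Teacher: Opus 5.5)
Your proof is correct and complete. The parity argument gives $\dim\mathcal{R}_p=1$. For $v\in\mathcal{R}_p$ the identity $(\alpha\wedge d\alpha)_p(v,e_1,e_2)=\alpha_p(v)\,d\alpha_p(e_1,e_2)$ holds. Pairing $\Phi(v)$ with $v$ to get $\alpha(v)^2=0$ correctly shows that $\Phi$ is injective, and $X_\alpha=\Phi^{-1}(\alpha)$ settles smoothness.

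The paper gives no proof of this proposition. It simply recalls it as standard contact-geometry background, pointing to Geiges' book, so there is no competing argument to compare with. What you wrote is essentially the standard argument. The one thing worth noting is how you get smoothness: you use the bundle isomorphism $\Phi\colon TM\to T^{*}M$ rather than the lemma that $\ker d\alpha$, as the kernel of a constant-rank bundle map, is a smooth line subbundle. This is a tidy way to avoid that lemma. The argument also carries over to dimension $2n+1$ once the parity step is replaced by the remark that $\alpha\wedge(d\alpha)^n\neq 0$ forces $d\alpha_p$ to have rank $2n$.

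Incidentally, running your pointwise recipe on the paper's $\alpha=dz+\frac{1}{2}(y\,dx-x\,dy)$ confirms $X_\alpha=E_3$. It also gives $d\alpha=-dx\wedge dy$ and hence $\alpha\wedge d\alpha=-dx\wedge dy\wedge dz$. So the paper's claim $\alpha\wedge d\alpha=dx\wedge dy\wedge dz$ has a sign slip. This is harmless for the Reeb field, but it makes $dv_\alpha$ induce the orientation opposite to the one in which $E_1,E_2,E_3$ is positive.
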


\begin{definition}[CR-structure]
{\rm A \emph{strongly pseudo-convex Cauchy-Riemann structure} (spc-structure, in short) of an orientable $3$-manifold 
$M$ is a pair $(\alpha,J)$ consisting of a contact form $\alpha$ and a complex structure 
$J$ on the contact structure $\xi=\mathrm{Ker}\,\alpha$ satisfying 
\[
d\alpha(X,JX)>0,\quad X\in\varGamma(\xi).
\]
An orientable $3$-manifold $M$ equipped with a spc-structure $(\alpha,J)$ is called a 
$3$-dimensional \emph{strongly pseudo-convex CR-mannifold}. On the space $\varGamma(\xi)$ of smooth sections 
of $\xi$, we introduce $L\in\varGamma(\xi^{*}\otimes\xi^{*})$ by $L(X,Y)=d\alpha(X,JY)$ and call it 
the \emph{Levi-form}. 
On a $3$-dimensional 
strongly pseudo-convex CR-mannifold $M$, the complex structure $J$ is extended to an 
endomorphism field $J$ on $M$ by $JX_{\alpha}=0$. Next, the Levi-form $L$ is extended to the 
Riemannian metric $g$ on $M$ by 
$g= L +\alpha\otimes\alpha$. The metric $g$ is called 
the \emph{Webster metric}.
}
\end{definition}

\subsection{The contact magnetic field}

The 1-form
\[
\alpha=dz+\frac{1}{2}(ydx-xdy)
\]
is a contact form of the Cartesian $3$-space $\mathbb{R}^3(x,y,z)$.
Remarkably, $\alpha$ is a \emph{left invariant contact form} on the 
$3$-dimensional Heisenberg group. Note that, since $\alpha\wedge d\alpha=dx\wedge dy\wedge dz>0$, 
the contact structure $\xi=\mathrm{Ker}\,\alpha$ is co-oriented.
The Reeb vector field is 
$X_{\alpha}:=E_3$. The contact structure $\xi=\mathrm{Ker}\,\alpha$ is called 
the \emph{standard contact structure} of the Heisenberg group. 
Let us introduce the complex $J$ of $\xi$ by 
\[
JE_1=E_2,
\quad 
JE_2=-E_1.
\]
Then $(\alpha,J)$ gives a strongly pseudo-convex CR-structure on the Heisenberg group. 
We extend $J$ to the Heisenberg group by $JE_3=0$. 
The Webster metric is left invariant and given explicitly by
\[
g=dx^{2}+dy^{2}+\left(
dz+\frac{1}{2}(ydx-xdy)
\right)^2.
\]
The basis $\{E_1,E_2,E_3\}$ is orthonormal with respect to $g$. 
In particular, the Reeb vector field is metrically dual to $\alpha$.
The Heisenberg group equipped with the Webster metric is nothing but  
the model space $\mathrm{Nil}_3$
of \emph{nilgeometry} in the sense of Thurston \cite{Thurston}. 
Hereafter we denote the Heisenberg algebra (equipped with 
the inner product above) by $\mathfrak{nil}_3$. 

The exponential map 
$\exp_{\mathfrak{nil}_3}:\mathfrak{nil}_3\to\mathrm{Nil}_3$ 
satisfies
\[
\exp_{\mathfrak{nil}_3}(X)\exp_{\mathfrak{nil}_3}(Y)=\exp_{\mathfrak{nil}_3}\left(X+Y+\frac{1}{2}[X,Y]\right),
\quad 
X,Y\in \mathfrak{nil}_3
\]
and is given 
explicitly by
\[
\exp_{\mathfrak{nil}_3}(uE_1+vE_2+wE_3)=
\left(
\begin{array}{ccc}
1 &  u& w+(uv)/2 \\
0 & 1 &v\\
0 & 0 &1
\end{array}
\right).
\]
The Levi-Civita connection $\nabla$ of the Webster metric $g$ is 
described by $\nabla_{E_1}E_1=\nabla_{E_2}E_2=\nabla_{E_3}E_3=0$ and
\[
\nabla_{E_1}E_2=
-\nabla_{E_2}E_1=\frac{1}{2}E_3,
\quad
\nabla_{E_1}E_3=
\nabla_{E_3}E_1=-\frac{1}{2}E_2,
\quad 
\nabla_{E_2}E_3=
\nabla_{E_3}E_2=\frac{1}{2}E_1.
\]%
This table of $\nabla$ implies that the Reeb vector field 
is a unit Killing vector field. Moreover, the volume element 
of the metric $g$ coincides with $dv_{\alpha}$. 
The $2$-form $F$ associated to $X_{\alpha}$ is 
$d\alpha$ and it is a left invariant magnetic field and it is called the 
\emph{contact magnetic field} of the Heisenberg group $\mathrm{Nil}_3$. The Lorentz force of $F$ coincides with $J$. 

The study of contact magnetic geodesics 
of $\mathrm{Nil}_3$ draws upon three distinct geometries: 
Thurston geometry, contact geometry, and CR geometry.

\subsection{Slant curves}
Since $X_{\alpha}$ is a Killing magnetic field, 
the conservation law (Proposition \ref{prop:conservation}) 
implies that the angle function of a 
unit speed contact magnetic geodesic and the Reeb vector field is a 
first integral of the contact magnetic geodesic flow.

Motivated by this fact, we introduce the following notion.
\begin{definition}{\rm 
Let $\gamma(s)$ be a unit speed curve in $\mathrm{Nil}_3$. 
The angle function $\theta(s)$ of the velocity 
vector field $\dot{\gamma}(s)$ and $X_{\alpha}(\gamma(s))$ is called 
the \emph{contact angle} of $\gamma(s)$. 
A unit speed curve $\gamma(s)$ is said to be a 
\emph{slant curve} if $\theta(s)$ is constant along $\gamma(s)$.}
\end{definition}

\begin{example}[Reeb flows]
{\rm 
The integral curves $\mathrm{Exp}(sX_{\alpha})$ of the Reeb 
vector field $X_{\alpha}$ of a contact $3$-manifold are called \emph{Reeb flows}. 
Reeb flows and $\mathrm{Exp}(-sX_{\alpha})$ of $\mathrm{Nil}_3$ satisfy $\sin\theta(s)=0$.
}
\end{example}

\begin{example}[Legendre curves]
{\rm 
A regular curve $\gamma(s)$ of a contact $3$-manifold is said 
to be \emph{Legendre} if it satisfies $\alpha(\dot{\gamma}(s))=0$. 
Legendre curves in $\mathrm{Nil}_3$ are characterized by 
the condition $\cos\theta(s)=0$.
}
\end{example}
The notion of a slant curve depends on both the contact form 
$\alpha$ and the metric $g$. It should be remarked that 
the notion of Reeb flow and that of Legendre curve depend only 
on $\alpha$. Both Reeb flows and Legendre curves (especially Legendre knots) are 
one of the central topics in contact geometry and contact topology (see,
\textit{e.g.}, \cite{CHHL,EF}).

\section{Homogeneous geometry of $\mathrm{Nil}_3$}
\subsection{The oscillator group}
The identity component $\mathrm{Iso}_{\circ}(\mathrm{Nil}_3)$ is 
isomorphic to the so-called \emph{oscillator group} (see \textit{e.g.}, \cite{BigRem14,Streater}):
\[
\mathrm{Osc}=\{\mathsf{M}(x,y,z,t)
\>|\>x,y,z,t
\in\mathbb{R}\},
\]
where
\[
\mathsf{M}(x,y,z,t)=\left(
\begin{array}{cccc}
1 & x\sin t-y\cos t
& x\cos t+y\sin t
& 2z\\
0 & \cos t & 
-\sin t & x\\
0 & \sin t & \cos t & y\\
0 & 0 & 0 &1
\end{array}
\right).
\]
The multiplication law of $\mathrm{Osc}$ is expressed as
\begin{align}
&\quad (x_1,y_1,z_1,e^{it_1})
(x_2,y_2,z_2,e^{it_2})\\
&=\begin{pmatrix}
x_1+x_2\cos t_1-y_2\sin t_1
\\
y_1+x_2\sin t_1+y_2\cos t_1
\\
z_1+z_2+\frac{1}{2}
\{\cos t_1(x_1y_2-x_2y_1)
+\sin t_1(x_1x_2+y_1y_2)
\}
 \\
e^{i(t_1+t_2)}
\end{pmatrix}.
\notag
\end{align}

\subsection{The oscillator algebra}

The Lie algebra $\mathfrak{osc}$ of 
$\mathrm{Osc}$ is called the 
\emph{oscillator algebra}:
\[
\mathfrak{osc}=\left\{
\left.
m(x,y,z,t):=
\left(
\begin{array}{cccc}
0 & -y &x & 2z\\
0 & 0 & -t & x\\
0 & t & 0 & y\\
0 & 0 & 0 & 0
\end{array}
\right)
\>\right|\>
x,y,z,t\in\mathbb{R}
\right\}.
\]%
The oscillator algebra is 
generated by the basis
\[
E_1=m(1,0,0,0),
\quad 
E_2=m(0,1,0,0),
\quad
E_3=m(0,0,1,0),
\quad
E_4=m(0,0,0,1).
\]
The basis $\{E_1,E_2,E_3,E_4\}$ satisfies the commutation relations:
\[
[E_1,E_2]=E_3,\quad
[E_4,E_1]=E_2,
\quad
[E_4,E_2]=-E_1.
\]
Note that 
\begin{align*}
\mathsf{M}(x,y,z,t)
=& \exp(m(x,y,z,0))
\exp(m(0,0,0,t))
\\
=& \exp(xE_1+yE_2+zE_3)\exp(tE_4)
\end{align*}
holds. Here $\exp:\mathfrak{gl}_{4}\mathbb{R}\to\mathrm{GL}_4\mathbb{R}$ is the 
matrix exponential map.

The left invariant vector fields induced from 
$E_1,E_2,E_3$ and $E_4$ are given by
\begin{align}
E_1=&\cos t\,\frac{\partial}{\partial x}
+\sin t\frac{\partial}{\partial y}
+\frac{1}{2}(x\sin t-y\cos t)\frac{\partial}
{\partial z},\notag
\\
E_2=&-\sin t\frac{\partial}{\partial x}
+\cos t\frac{\partial}{\partial y}
+\frac{1}{2}(x\cos t+y\sin t)\frac{\partial}
{\partial z},\label{eq:3.2}
\\
E_3=&\frac{\partial}{\partial z},
\quad 
E_4=\frac{\partial}{\partial t},
\notag
\end{align}
respectively. 
\subsection{Subgroups}

The Heisenberg group $\mathrm{Nil}_3$ is identified 
with 
the following Lie subgroup 
of $\mathrm{Osc}$:
\[
\left\{
\mathsf{M}(x,y,z,0)=
\left(
\begin{array}{cccc}
1 & -y
& x
& 2z\\
0 & 1 & 
0 & x\\
0 & 0 & 1 & y\\
0 & 0 & 0 &1
\end{array}
\right)\in \mathrm{Osc}
\right\}.
\]
Indeed, 
\[
\mathrm{Nil}_3\ni
(x,y,z)\longmapsto \mathsf{M}(x,y,z,0)
\in\mathrm{Osc}
\]
is a Lie group isomorphism. 
Next, the circle group $\mathrm{U}(1)$ is 
isomorphic to the subgroup
\[
\left\{
\mathsf{M}(0,0,0,t)
=\left(
\begin{array}{cccc}
1 & 0
& 0
& 0\\
0 & \cos t & 
-\sin t & 0\\
0 & \sin t & \cos t & 0\\
0 & 0 & 0 &1
\end{array}
\right)
\in \mathrm{Osc}
\right\}
\]
under the Lie group isomorphism:
\[
\mathrm{U}(1)\ni e^{it}\longmapsto \mathsf{M}(0,0,0,t).
\]
One can see that 
\begin{equation}
\mathsf{M}(x,y,z,0)\mathsf{M}(0,0,0,t)=\mathsf{M}(x,y,z,t),
\end{equation}
for all $x$, $y$, $z$, $t\in\mathbb{R}$.

The Lie algebras of these Lie subgroups are
\begin{align*}
&
\left\{
\left(
\begin{array}{cccc}
0 & -y & x & 2z\\
0 & 0 & 0 & x\\
0 & 0 & 0 & y\\
0 & 0 & 0 &0
\end{array}
\right)
=xE_1+yE_2+zE_3\in\mathfrak{osc}
\right\}\cong\mathfrak{nil}_3,
\\
& 
\left\{
\left(
\begin{array}{cccc}
0 & 0 & 0 & 0\\
0 & 0 & -t & 0\\
0 & t & 0 & 0\\
0 & 0 & 0 &0
\end{array}
\right)
=tE_4\in\mathfrak{osc}
\right\}\cong\mathfrak{u}(1).
\end{align*}
The circle 
group 
$\mathrm{U}(1)$ is a normal subgroup of $\mathrm{Osc}$. 

Note that the restrictions of left invariant vector fields 
$E_1$, $E_2$ and $E_3$ given by \eqref{eq:3.2} coincide with 
the left invariant vector fields \eqref{eq:2.1} on the 
Heisenberg group $\mathrm{Nil}_3$.

Moreover, $\mathrm{Nil}_3\cap \mathrm{U}(1)=\{\mathrm{Id}\}$.
Hence we obtain the Lie group splitting
\begin{equation}
\mathrm{Osc}=\mathrm{Nil}_3\rtimes\mathrm{U}(1).
\end{equation}
We obtain a Lie group 
isomorphism:
\[
\mathrm{Osc}\to\mathrm{Nil}_3\rtimes\mathrm{U}(1);
\quad 
\mathsf{M}(x,y,z,t)\longmapsto
(x,y,z,e^{it})
\]
with multiplication law:
\[
(a_1,b_1,c_1,e^{it_1})(a_2,b_2,c_2,e^{it_2})
=((a_1,b_1,c_1)((a_2,b_2,c_2)e^{it_1}),e^{i(t_1+t_2)}),
\]
where $(a_2,b_2,c_2)e^{it_1}$ is defined by
\[
(a_2,b_2,c_2)e^{it_1}=(a_2\cos t_1-b_2\sin t_1,a_2\sin t_1+b_2\cos t_1,c_2).
\]
\subsection{Homogeneous space representation}
The Lie group $G=\mathrm{Osc}$ acts isometrically and 
transitively on $\mathrm{Nil}_3\subset \mathrm{Osc}$ by left 
translation. The isometric action of $\mathrm{Osc}=\mathrm{Nil}_3\rtimes
\mathrm{U}(1)$ is given explicitly by
\begin{align*}
&\quad (a,b,c,e^{it})\cdot (x,y,z)
\\
&=\left(a+x\cos t-y\sin t,b+x\sin t+y\cos t,c+z+
\frac12\big(
\cos t(ay-bx)
+\sin t(ax+by)
\big)
\right).
\end{align*}
The isotropy 
subgroup $H=H_{\mathsf{M}(0,0,0,0)}$ at $\mathsf{M}(0,0,0,0)$ is 
$\mathrm{U}(1)\cong \{\mathsf{M}(0,0,0,t)\}_{t\in\mathbb{R}}$.
The isotropy algebra $\mathfrak{h}$ 
is $\mathbb{R}E_4$. The tangent space $T_{\mathsf{M}(0,0,0,0)}\mathrm{Nil}_3$ is 
the Lie algebra 
$\mathfrak{nil}_3=\mathbb{R}E_1\oplus\mathbb{R}E_2\oplus\mathbb{R}E_3$ 
of $\mathrm{Nil}_3$. 
One can see that the decomposition $\mathfrak{g}=\mathfrak{h}+\mathfrak{nil}_3$ is 
\emph{reductive}. 
However, as we will see later, 
$G/H$ is \emph{not} naturally 
reductive with respect to the 
reductive decomposition 
$\mathfrak{g}=\mathfrak{h}+\mathfrak{nil}_3$. 

\section{Homogeneous magnetic geodesics}\label{sec:7}
\subsection{Geodesics and contact magnetic geodesics in $\mathrm{Nil}_3$}
The explicit parametrization of unit speed geodesics 
in $\mathrm{Nil}_3$ is well known 
(see \textit{e.g.}, \cite{EGM,IKOS1,Mare}). For later use 
we recall the parametrization here.

Let us determine the unit speed geodesics in $\mathrm{Nil}_3$ under the 
initial condition:
\[
\gamma(0)=(x(0),y(0),z(0))=(x_0,y_0,z_0),
\quad  
\dot{\gamma}(0)=aE_1+bE_2+cE_3.
\]
The unit tangent vector is
\[
\dot{\gamma}(s)=\dot{x}(s)\frac{\partial}{\partial x}
+\dot{y}(s)\frac{\partial}{\partial y}
+\dot{z}(s)\frac{\partial}{\partial z}
=\dot{x}E_1
+\dot{y}E_2+
\left(\dot{z}+\frac{1}{2}(\dot{x}y-x\dot{y})
\right)E_3.
\]
Hence the initial tangent vector is 
\[
\dot{\gamma}(0)=
aE_1
+bE_2+
\left(\dot{z}(0)+\frac{1}{2}(ay_0-bx_0)
\right)E_3,
\quad 
a=\dot{x}(0),\>
b=\dot{y}(0).
\]
Note that the contact angle is 
given by
\[
\cos\theta(s)=\dot{z}(s)+\frac{1}{2}(\dot{x}(s)y(s)-x(s)\dot{y}(s)),
\quad 
\cos\theta(0)=c.
\]
The acceleration vector field is computed as
\begin{equation}\label{eq:accel}
\nabla_{\dot{\gamma}}\dot{\gamma}
=(\ddot{x}+\cos\theta\dot{y})E_1
+(\ddot{y}-\cos\theta\dot{x})E_2+
(\cos\theta)^{\cdot}E_3.
\end{equation}
Hence the geodesic equation is equivalent to the system:
\[
\ddot{x}+\cos\theta\,\dot{y}=0,
\quad 
\ddot{y}-\cos\theta\,\dot{x}=0,
\quad 
\frac{d}{ds}(\cos\theta)=0.
\]
The third equation implies that the contact angle of a unit speed 
geodesic is constant. This fact in nothing but the consequence of the conservation 
law (Proposition \ref{prop:conservation}).

From the first and second equation, we get
\[
\dot{x}(s)=a\cos(cs)-b\sin(cs),
\quad 
\dot{y}(s)=a\sin(cs)+b\cos(cs),\quad c=\cos\theta.
\]
First we assume that $c\not=0$. 
Then, under the initial condition 
$x(0)=x_0$ and $y(0)=y_0$, we obtain
\[
x(s)=\frac{1}{c}\left(
(cx_0-b)+a\sin(cs)+b\cos(cs)
\right),
\]
\[
y(s)=\frac{1}{c}\left(
(cy_0+a)-a\cos(cs)+b\sin(cs)
\right).
\]
Since 
\[
x=\left(x_0-\frac{b}{c}\right)+\frac{1}{c}\dot{y},
\quad 
y=\left(y_0+\frac{a}{c}\right)-\frac{1}{c}\dot{x},
\]
we have
\[
\dot{x}y=\left(y_0+\frac{a}{c}\right)\dot{x}-\frac{1}{c}\dot{x}^2,
\quad x\dot{y}=\left(x_0-\frac{b}{c}\right)\dot{y}+\frac{1}{c}\dot{y}^2.
\]
As $\gamma$ is unit speed, we obtain
\[
\dot{x}y-x\dot{y}
=\left(y_0+\frac{a}{c}\right)\dot{x}
-\left(x_0-\frac{b}{c}\right)\dot{y}
-\frac{1-c^2}{c}.
\]
From these, we have 
\[
\dot{z}=c-\frac{1}{2}(\dot{x}y-x\dot{y})
=c-\frac{1}{2}\left(
\left(y_0+\frac{a}{c}\right)\dot{x}
-\left(x_0-\frac{b}{c}\right)\dot{y}
-\frac{1-c^2}{c}
\right).
\]%
Hence
\[
z(s)=z_0+cs+\frac{(1-c^2)s}{2c}-\frac{1}{2}\left(
\left(y_0+\frac{a}{c}\right)(x(s)-x_0)
-\left(x_0-\frac{b}{c}\right)(y(s)-y_0)
\right).
\]
Let us choose
$x_0=y_0=z_0=0$, then we have

\begin{align*}
x(s)=&\frac{1}{c}\left(
a\sin(cs)+b(\cos(cs)-1)
\right),
\\
y(s)=&\frac{1}{c}\left(
a(1-\cos(cs))+b\sin(cs)
\right),
\\
z(s)=&\frac{1+c^2}{2c}\,s-\frac{a^2+b^2}{2c^2}\sin(cs).
\end{align*}
Next, we consider the case $c=0$. 
In this case $\gamma(s)$ is a \emph{Legendre 
geodesic}. 
The $x$-coordinate and $y$-coordinate are 
given by
\[
x(s)=as+x_{0},\quad
y(s)=bs+y_{0}.
\]
For the $z$-coordinate we get, successively 
\[
\dot{z}(s)=-\frac{1}{2}(\dot{x}(s)y(s)-x(s)\dot{y})(s))
=-\frac{1}{2}(ay_0-bx_0),
\]
which implies
\[
z(s)=z_{0}-\frac{ay_0-bx_0}{2}s.
\]
If we choose $x_0=y_0=z_0=0$, then
$z(s)=0$.

\begin{proposition}\label{prop:geo}
The unit speed geodesic $\gamma(s)$ of $\mathrm{Nil}_3$ starting at 
the origin $(0,0,0)$ 
with initial velocity 
$\dot{\gamma}(0)=aE_1+bE_2+cE_3$ 
is parametrized as
\begin{enumerate}
\item If $c=\cos\theta\not=0$, then
\begin{equation}\label{eq:Geo}
\left\{
\begin{array}{l}
x(s)=\frac{1}{c}\left(
(a\sin(cs)+b(\cos(cs)-1)
\right),
\\
\vspace{0.15cm}
\\
y(s)=\frac{1}{c}\left(
a(1-\cos(cs))+b\sin(cs)
\right),
\\
\vspace{0.15cm}
\\
z(s)=\frac{1+c^2}{2c}\,s-\frac{a^2+b^2}{2c^2}\sin(cs), 
\end{array}
\right.
\end{equation}
where $a^2+b^2=\sin^2\theta$. 
In particular, when $c=\pm 1$, then 
$\gamma(s)=(0,0,s)$ is the Reeb flow up to orientation. 
\item If $c=\cos\theta=0$, then
\begin{equation}\label{eq:GeoL}
x(s)=as,\quad
y(s)=bs,\quad
z(s)=0, \quad a^2+b^2=1.
\end{equation} 
\end{enumerate}
\end{proposition}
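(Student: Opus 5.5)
The plan is to solve the geodesic system obtained above from \eqref{eq:accel} with the initial data at the origin, and then to check the resulting formulas directly. Write $c=\cos\theta(0)$, so that $\dot{\gamma}(0)=aE_1+bE_2+cE_3$. The third equation $\frac{d}{ds}(\cos\theta)=0$ gives $\cos\theta\equiv c$. This is the conservation law of Proposition \ref{prop:conservation}, since $E_3$ is a unit Killing field. Because $\gamma$ has unit speed, $a^2+b^2+c^2=1$, that is, $a^2+b^2=\sin^2\theta$. The first two equations then become a linear system with constant coefficients for $(\dot{x},\dot{y})$:
\[
\ddot{x}=-c\,\dot{y},\qquad \ddot{y}=c\,\dot{x}.
\]
In complex notation $w=\dot{x}+i\dot{y}$ this reads $\dot{w}=icw$. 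With $w(0)=a+ib$ we get $w(s)=(a+ib)e^{ics}$, which is exactly the expression for $\dot{x},\dot{y}$ recorded above.

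Next we split into cases. If $c\neq0$, integrate once with $x(0)=y(0)=0$. This gives $x+iy=\frac{a+ib}{ic}(e^{ics}-1)$, and taking real and imaginary parts yields the first two lines of \eqref{eq:Geo}. For $z$, use $\dot{z}=c-\frac12(\dot{x}y-x\dot{y})$.

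The computation of $\dot{x}y-x\dot{y}$ is the main bookkeeping step. The cleanest route is to note that $x\dot{y}-\dot{x}y=\operatorname{Im}(\bar{u}\dot{u})$ with $u=x+iy$. Substituting $u=\frac{a+ib}{ic}(e^{ics}-1)$ and using $|a+ib|^2=a^2+b^2$ gives
\[
\operatorname{Im}(\bar{u}\dot{u})=\frac{a^2+b^2}{c}\bigl(1-\cos(cs)\bigr).
\]
Hence
\[
\dot{z}=c+\frac{a^2+b^2}{2c}\bigl(1-\cos(cs)\bigr).
\]
Integrating with $z(0)=0$ and using $c+\frac{a^2+b^2}{2c}=\frac{2c^2+1-c^2}{2c}=\frac{1+c^2}{2c}$ gives the third line of \eqref{eq:Geo}.

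The special cases follow quickly. When $c=\pm1$ we have $a=b=0$, so $x=y=0$ and $z=\frac{2}{2c}s=\pm s$. This is the Reeb flow up to orientation.

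If $c=0$, then $\dot{x}=a$ and $\dot{y}=b$ are constant, so $x=as$ and $y=bs$. Then $\dot{x}y-x\dot{y}=abs-abs=0$, hence $\dot{z}=0$ and $z\equiv0$, with $a^2+b^2=1$. This proves \eqref{eq:GeoL}.

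Finally, the general uniqueness theorem for ODEs guarantees that these curves are the unique solutions, so they are the geodesics described in the statement. No step is a real obstacle. The only care needed is in the sign conventions of the identity for $\dot{x}y-x\dot{y}$.
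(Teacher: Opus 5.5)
Your proposal is correct and follows essentially the same route as the paper. Both solve the constant-coefficient system for $(\dot{x},\dot{y})$, integrate once, read off $\dot{z}$ from $\cos\theta=\dot{z}+\frac{1}{2}(\dot{x}y-x\dot{y})$ and integrate again, treating $c=0$ separately; the differences are only in bookkeeping: you compute $x\dot{y}-\dot{x}y=\operatorname{Im}(\bar{u}\dot{u})$ in complex notation, while the paper writes $x$ and $y$ as affine expressions in $\dot{y}$ and $\dot{x}$, uses $\dot{x}^2+\dot{y}^2=1-c^2$, and works from an arbitrary initial point before specializing to the origin.
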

\subsection{Contact magnetic geodesics}
Next we consider the Lorentz equation 
$\nabla_{\dot{\gamma}}\dot{\gamma}=qL\dot{\gamma}$ with respect to the 
contact magnetic field $F=d\alpha$. 
Assume that the contact magnetic geodesic is \emph{arc length 
parametrized}, then the Lorentz equation is 
the system:
\[
\ddot{x}+(q+\cos\theta)\,\dot{y}=0,
\quad 
\ddot{y}-(q+\cos\theta)\,\dot{x}=0,
\quad 
(\cos\theta)^{\cdot}=0.
\]
Set $c_q:=q+\cos\theta$, then we have the following result.
\begin{proposition}[\cite{DIMN}]\label{prop:mag}
The unit speed contact magnetic 
geodesic $\gamma(s)$ of $\mathrm{Nil}_3$ starting at $(x_0,y_0,z_0)$ 
with initial velocity 
$\dot{\gamma}(0)=aE_1+bE_2+cE_3$ 
is parametrized as
\begin{enumerate}
\item If $c_q\not=0$, then 
\begin{align}
x(s)=&\frac{1}{c_q}\left(
(a\sin(c_qs)+b(\cos(c_qs)-1)\,
\right)+x_{0},
\nonumber
\\
y(s)=&\frac{1}{c_q}\left(
a(1-\cos(c_qs)\,)+b\sin(c_qs)
\right)+y_{0},
\label{eq:mag}
\\
&+\frac{1}{2c_q}\left(
(ax_0+by_0)(1-\cos(c_qs))+\big(bx_0-ay_0-\frac{a^2+b^2}{c_q}\big)\sin(c_qs)
\right)
\nonumber
\end{align}
\item If $c_q=0$, then 
\begin{equation}\label{eq:mag0}
x(s)=as+x_{0},\quad 
y(s)=bs+y_{0},\quad 
z(s)=-qs+z_{0}-\frac{1}{2}(ay_0-bx_0)s.
\end{equation}
\end{enumerate}
\end{proposition}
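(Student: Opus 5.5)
The statement is Proposition~\ref{prop:mag}, the explicit parametrization of unit speed contact magnetic geodesics. The plan is to integrate the Lorentz system directly, following the computation done above for geodesics (Proposition~\ref{prop:geo}).

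\textbf{Step 1: the contact angle.} The third equation of the Lorentz system, $(\cos\theta)^{\cdot}=0$, says $\cos\theta\equiv c$ along the curve. This is the conservation law of Proposition~\ref{prop:conservation} applied to the Killing field $X_\alpha=E_3$. Consequently $c_q=q+c$ is a constant.

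\textbf{Step 2: the planar part.} The first two equations become a linear constant-coefficient system $\ddot x=-c_q\dot y$, $\ddot y=c_q\dot x$. In complex form, $w=\dot x+i\dot y$ satisfies $\dot w=ic_q w$. With $w(0)=a+ib$ this gives
\[
\dot x=a\cos(c_qs)-b\sin(c_qs),\qquad \dot y=a\sin(c_qs)+b\cos(c_qs).
\]
\begin{itemize}
\item If $c_q\neq0$, integrating from $(x_0,y_0)$ gives the stated formulas for $x(s)$ and $y(s)$. They are the geodesic formulas with $c$ replaced by $c_q$, translated by $(x_0,y_0)$.
\item If $c_q=0$, then $\dot x=a$ and $\dot y=b$, so $x=as+x_0$ and $y=bs+y_0$.
\end{itemize}

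\textbf{Step 3: the vertical part.} Use the contact-angle identity $\dot z=c-\frac12(\dot x y-x\dot y)$. As in the geodesic case, for $c_q\neq0$ write
\[
x=\Big(x_0-\tfrac{b}{c_q}\Big)+\tfrac{1}{c_q}\dot y,\qquad y=\Big(y_0+\tfrac{a}{c_q}\Big)-\tfrac{1}{c_q}\dot x .
\]
Substituting, and using $\dot x^2+\dot y^2=a^2+b^2$, yields
\[
\dot x y-x\dot y=\Big(y_0+\tfrac{a}{c_q}\Big)\dot x-\Big(x_0-\tfrac{b}{c_q}\Big)\dot y-\tfrac{a^2+b^2}{c_q}.
\]
So $\dot z$ is a constant plus a linear combination of $\cos(c_qs)$ and $\sin(c_qs)$. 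Integrating gives $z(s)$: a linear term $\big(c+\frac{a^2+b^2}{2c_q}\big)s$ plus terms in $1-\cos(c_qs)$ and $\sin(c_qs)$ with coefficients involving $ax_0+by_0$, $bx_0-ay_0$ and $(a^2+b^2)/c_q$, all over $2c_q$.

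For $c_q=0$ we have $c=-q$. The identity reduces to $\dot z=c-\frac12(ay_0-bx_0)$, giving $z=-qs+z_0-\frac12(ay_0-bx_0)s$ as claimed.

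\textbf{Step 4: checks and main obstacle.} Finally I would verify the initial conditions $\gamma(0)=(x_0,y_0,z_0)$ and $\dot\gamma(0)=aE_1+bE_2+cE_3$. The calculus is routine; the main obstacle is bookkeeping in Step 3. The displayed formula in the statement appears to merge the $z$-line into the $y$-line (the final line is the oscillatory part of $z(s)$). It carries the coefficient $\frac{1}{2c_q}$ but omits both $z_0$ and the linear term $\big(c+\frac{a^2+b^2}{2c_q}\big)s$. I would recompute these carefully against the geodesic case $q=0$, $x_0=y_0=0$, as a consistency check.
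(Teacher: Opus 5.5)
Your proposal is correct and follows essentially the route the paper relies on. The paper gives no separate proof of this proposition (it cites \cite{DIMN}), but its geodesic computation just before it is exactly your Steps 1--3, with $c$ replaced by $c_q=q+c$ and the starting point kept general. Your reading of the display is also right. Carrying out Step 3 gives
\[
z(s)=z_0+\Big(c+\frac{a^2+b^2}{2c_q}\Big)s+\frac{1}{2c_q}\Big((ax_0+by_0)(1-\cos(c_qs))+\Big(bx_0-ay_0-\frac{a^2+b^2}{c_q}\Big)\sin(c_qs)\Big),
\]
so the printed last line is the oscillatory part of a dropped $z(s)$-line. For $x_0=y_0=z_0=0$ this agrees with the $z$-coordinate in Theorem~\ref{thm:7.1}, and with \eqref{eq:Geo} when also $q=0$.
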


\subsection{Homogeneous geodesics} Here we recall some basic facts on 
homogeneous geometry for our use. 

Let $M=(G/H,g)$ 
be a homogeneous Riemannian space with reductive decomposition 
$\mathfrak{g}=\mathfrak{h}+\mathfrak{m}$. 

We identify the tangent space $T_{o}M$ at the origin 
$o=H$ with the linear subspace $\mathfrak{m}$ (called the 
\emph{Lie subspace}). 
Let us introduce a symmetric bilinear map 
$\mathsf{U}_{\mathfrak{m}}:\mathfrak{m}\times\mathfrak{m}\to\mathfrak{m}$ by
\begin{equation}\label{eq:U-tensor}
2\langle \mathsf{U}_{\mathfrak{m}}(X,Y),Z\rangle
=\langle X,[Z,Y]_{\mathfrak{m}}\rangle
+\langle Y,[Z,X]_{\mathfrak{m}}\rangle,
\quad X,Y,Z\in\mathfrak{m},
\end{equation}
where the subscript $\mathfrak{m}$ means the 
$\mathfrak{m}$-component of vectors.
Then, under the identification $T_{o}M=\mathfrak{m}$, we get
\[
\nabla_{X}Y=\mathsf{U}_{\mathfrak{m}}(X,Y)+\frac{1}{2}[X,Y]_{\mathfrak{m}},
\quad X,Y\in\mathfrak{m}.
\]

\begin{definition}
{\rm 
A reductive homogeneous 
Riemannian space $M=G/H$ with 
reductive decomposition 
$\mathfrak{g}=\mathfrak{h}+\mathfrak{m}$ is said to be 
\emph{naturally reductive} with respect to $\mathfrak{m}$ 
if it satisfies $\mathsf{U}_{\mathfrak{m}}=0$.
}
\end{definition}
A homogeneous Riemannian space $M=G/H$ is said to be 
a \emph{naturally reductive homogeneous space} if it admits a 
Lie subspace $\mathfrak{m}$ such that the reductive decomposition
$\mathfrak{g}=\mathfrak{h}+\mathfrak{m}$ is naturally reductive 
with respect to it.

\begin{proposition}
A homogeneous Riemannian space $G/H$ with reductive 
decomposition $\mathfrak{g}=\mathfrak{h}+\mathfrak{m}$ 
is naturally reductive if
and only if the geodesic through 
the origin $o$ and tangent to $X
\in \mathfrak{m}=T_{o}M$ is the curve
$\exp(sX)\cdot o$, 
orbit of the one-parameter subgroup $\exp (sX)$ of $G$, 
for all $X$.
\end{proposition}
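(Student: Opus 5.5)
The plan is to reduce the statement to a pointwise condition at the origin and then identify that condition with $\mathsf{U}_{\mathfrak{m}}(X,X)=0$.

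\emph{Step 1: reduction to the origin.} For $X\in\mathfrak{m}$ put $\gamma_X(s)=\exp(sX)\cdot o$. Then $\gamma_X(s+t)=\exp(tX)\cdot\gamma_X(s)$, and left translation by $\exp(tX)$ is an isometry. Hence the acceleration $\nabla_{\dot\gamma_X}\dot\gamma_X$ at time $t$ is the image of the acceleration at time $0$ under that isometry. So $\gamma_X$ is a geodesic if and only if its acceleration vanishes at $o$. Moreover $\dot\gamma_X(0)=X$, so by uniqueness $\gamma_X$ is the geodesic through $o$ tangent to $X$ exactly when this pointwise condition holds.

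\emph{Step 2: the acceleration at $o$.} Let $X^*$ be the Killing vector field on $M$ generated by the one-parameter group $\exp(sX)$. The curve $\gamma_X$ is an integral curve of $X^*$, so its acceleration at $o$ is $(\nabla_{X^*}X^*)_o$. Let $Z\in\mathfrak{m}$ and let $Z^*$ be the corresponding Killing field. Since $\nabla X^*$ is skew-adjoint, $\langle\nabla_{X^*}X^*,Z^*\rangle=-\langle\nabla_{Z^*}X^*,X^*\rangle=-\tfrac12 Z^*\langle X^*,X^*\rangle$. To evaluate this I will compute $|X^*|^2$ along $\exp(uZ)\cdot o$ by pulling back to $o$ with the isometry $\exp(-uZ)$. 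This gives $|(\mathrm{Ad}(\exp(-uZ))X)_{\mathfrak{m}}|^2$, whose $u$-derivative at $0$ is $\mp 2\langle[Z,X]_{\mathfrak{m}},X\rangle$. Keeping track of the sign convention for $Z^*$, I expect to obtain
\[
\langle(\nabla_{X^*}X^*)_o,Z\rangle=\langle X,[Z,X]_{\mathfrak{m}}\rangle=\langle \mathsf{U}_{\mathfrak{m}}(X,X),Z\rangle,
\]
where the last equality is \eqref{eq:U-tensor} with $Y=X$. As a consistency check, the same answer should follow from the displayed formula $\nabla_XY=\mathsf{U}_{\mathfrak{m}}(X,Y)+\tfrac12[X,Y]_{\mathfrak{m}}$ with $Y=X$, since $[X,X]=0$.

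\emph{Step 3: conclusion.} By Steps 1 and 2, every orbit $\exp(sX)\cdot o$ with $X\in\mathfrak{m}$ is a geodesic if and only if $\mathsf{U}_{\mathfrak{m}}(X,X)=0$ for all $X\in\mathfrak{m}$. Since $\mathsf{U}_{\mathfrak{m}}$ is symmetric, polarization $2\mathsf{U}_{\mathfrak{m}}(X,Y)=\mathsf{U}_{\mathfrak{m}}(X+Y,X+Y)-\mathsf{U}_{\mathfrak{m}}(X,X)-\mathsf{U}_{\mathfrak{m}}(Y,Y)$ shows this is equivalent to $\mathsf{U}_{\mathfrak{m}}=0$, that is, to natural reductivity with respect to $\mathfrak{m}$.

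The main obstacle is Step 2: identifying the covariant derivative of the Killing field $X^*$ at $o$. The sign conventions there are delicate, because the fundamental vector field of a left action satisfies $[X^*,Z^*]=-[X,Z]^*$. The bookkeeping has to be done carefully so that the result comes out exactly as $\mathsf{U}_{\mathfrak{m}}(X,X)$ and not with a stray sign or an extra term.
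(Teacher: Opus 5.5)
Your proof is correct and takes the same route the paper relies on. The paper states this proposition without proof, but the Killing-field formula it quotes a few lines later, $g(\nabla_{X^{\sharp}}X^{\sharp},Z^{\sharp})=\langle[X_{\mathfrak h},X_{\mathfrak m}]+\mathsf{U}_{\mathfrak m}(X_{\mathfrak m},X_{\mathfrak m}),Z\rangle$, reduces at $o$ for $X\in\mathfrak{m}$ to exactly your Step~2, and homogeneity plus polarization then give the equivalence as you describe.

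The sign you hedge on is in fact determined: the $u$-derivative at $u=0$ of $|(\mathrm{Ad}(\exp(-uZ))X)_{\mathfrak m}|^2$ is $-2\langle X,[Z,X]_{\mathfrak m}\rangle$, which yields $+\langle \mathsf{U}_{\mathfrak m}(X,X),Z\rangle$. In any case an overall sign could not affect the conclusion, since only the vanishing of $\mathsf{U}_{\mathfrak m}(X,X)$ matters.
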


As a generalization of the class of naturally reductive homogeneous spaces, 
the notion of Riemannian g. o. space is introduced \cite{KV}.
\begin{definition}{\rm
Let $M=G/H$ be a homogeneous Riemannian space with reductive 
decomposition $\mathfrak{g}=\mathfrak{h}+\mathfrak{m}$.  
A curve $\gamma$ starting at the origin $o\in M=G/H$ is 
said to be \emph{homogeneous} if it has the form 
$\gamma(s)=\exp(sW)\cdot o$ for some vector $W\in \mathfrak{g}$.
}
\end{definition}
A reductive homogeneous Riemannian space 
$M$ is said to be a \emph{Riemannian g.~o.~spaces} if 
all of whose geodesics are homogeneous with respect to the largest connected group $G$ of isometries. 
Clearly, naturally reductive homogeneous spaces are Riemannian g.o.~spaces.

For any vector $X\in\mathfrak{g}$, 
we can introduce a 
Killing vector field
$X^{\sharp}$ on the homogeneous Riemannian space $M=G/H$ by 
\[
X^{\sharp}_{p}=\frac{d}{ds}\biggr\vert_{s=0}
\exp_{\mathfrak{g}}(sX)\cdot p,
\quad p\in M,
\]
where $\exp_{\mathfrak{g}}:\mathfrak{g}\to G$ is the exponential map.
Moreover, we have
\[
g_{p}(\nabla_{X^{\#}}X^{\#},Z^{\#})
=\langle
[X_{\mathfrak h},X_{\mathfrak m}]+\mathsf{U}_{\mathsf m}(X_{\mathfrak m},X_{\mathfrak m}),
Z\rangle
\]
for all $Z\in\mathfrak{m}$. Here the subscript 
${}_\mathfrak{h}$ means the 
$\mathfrak{h}$-part of vectors.

For any vector $X\in\mathfrak{m}$, 
$\exp_{\mathfrak{g}}(sX)\cdot o$ is a pre-geodesic starting 
at the origin $o$ 
if and only if $\mathsf{U}_{\mathfrak{m}}(X,X)=kX$ for some 
constant $k$. This is equivalent to 
\[
\langle X,[V,X]\rangle=k\langle X,V\rangle
\]
for any $V\in\mathfrak{m}$. This simple observation is 
generalized to the following (well known)
criterion (see \textit{e.g.}, \cite{Arv,BN,KV}):
\begin{lemma}
In a reductive homogeneous Riemannian space 
$M=G/H$ with reductive decomposition 
$\mathfrak{g}=\mathfrak{h}+\mathfrak{m}$, 
a curve $\gamma(s)=\exp_{\mathfrak g}(sW)\cdot o$ 
is a geodesic for some parameter $s$ 
if and only if $W\in\mathfrak{g}$ satisfies
\begin{equation}
\label{eq:homgeo}
\langle [W,V]_{\mathfrak m},W_{\mathfrak m}
\rangle=k\langle 
W_{\mathfrak m},V\rangle
\end{equation}
for all $V\in\mathfrak{m}$. Here $k$ is a constant. 
If $k=0$, then $s$ is an affine parameter.
\end{lemma}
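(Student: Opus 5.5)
The plan is to reduce everything to a computation at the origin $o$ using the Killing vector field $W^{\sharp}$, and then translate the condition ``$\nabla_{W^\sharp}W^\sharp$ is tangent to $\gamma$'' into Lie-algebraic terms via the formula for $g(\nabla_{X^\sharp}X^\sharp,Z^\sharp)$ recalled above.

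First I would observe that $\gamma(s)=\exp_{\mathfrak g}(sW)\cdot o$ is exactly the integral curve of $W^{\sharp}$ through $o$. Moreover, the isometry $\exp_{\mathfrak g}(\sigma W)$ carries $W^{\sharp}$ to $(\mathrm{Ad}(\exp_{\mathfrak g}(\sigma W))W)^{\sharp}=W^{\sharp}$. Consequently $\gamma(s+\sigma)=\exp_{\mathfrak g}(\sigma W)\cdot\gamma(s)$, and the pair $(\dot\gamma,\nabla_{\dot\gamma}\dot\gamma)$ at $\gamma(\sigma)$ is the isometric image of that pair at $o$. In particular $\gamma$ has constant speed $|W_{\mathfrak m}|$. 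Hence $\gamma$ is a geodesic (after reparametrization) if and only if, at the single point $o$, the vector $\nabla_{W^\sharp}W^\sharp$ is a multiple $k\,W^{\sharp}_o=k\,W_{\mathfrak m}$ of the velocity, where we use the identification $W^\sharp_o\leftrightarrow W_{\mathfrak m}$ under $T_oM=\mathfrak m$.

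Next I would compute $\nabla_{W^\sharp}W^\sharp$ at $o$ from the quoted identity. For $Z\in\mathfrak m$ it gives $\langle[W_{\mathfrak h},W_{\mathfrak m}],Z\rangle+\langle \mathsf U_{\mathfrak m}(W_{\mathfrak m},W_{\mathfrak m}),Z\rangle$, and by \eqref{eq:U-tensor} the second term equals $\langle W_{\mathfrak m},[Z,W_{\mathfrak m}]_{\mathfrak m}\rangle$. On the other side of the desired equivalence, I would expand
\[
\langle [W,Z]_{\mathfrak m},W_{\mathfrak m}\rangle=\langle [W_{\mathfrak h},Z],W_{\mathfrak m}\rangle+\langle [W_{\mathfrak m},Z]_{\mathfrak m},W_{\mathfrak m}\rangle .
\]
Then I would use the $\mathrm{ad}(\mathfrak h)$-invariance of $\langle\cdot,\cdot\rangle$ on $\mathfrak m$, which holds because $H$ acts isometrically on $T_oM$ and $[\mathfrak h,\mathfrak m]\subset\mathfrak m$; it gives $\langle[W_{\mathfrak h},Z],W_{\mathfrak m}\rangle=-\langle Z,[W_{\mathfrak h},W_{\mathfrak m}]\rangle$. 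Comparing the two computations yields
\[
g_o(\nabla_{W^\sharp}W^\sharp,Z)=-\langle [W,Z]_{\mathfrak m},W_{\mathfrak m}\rangle\quad\text{for all } Z\in\mathfrak m .
\]
So $\nabla_{W^\sharp}W^\sharp=-k\,W_{\mathfrak m}$ at $o$ exactly when \eqref{eq:homgeo} holds with constant $k$ (up to the sign convention for $k$). Combined with the first step, this proves the equivalence.

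Finally, for the remark on the parameter, I would take $V=W_{\mathfrak m}$ in \eqref{eq:homgeo}. The left side becomes $\langle [W_{\mathfrak h},W_{\mathfrak m}],W_{\mathfrak m}\rangle$, which vanishes by $\mathrm{ad}(\mathfrak h)$-skewness. Hence $k|W_{\mathfrak m}|^2=0$, so in the Riemannian setting $k=0$ whenever $\gamma$ is nonconstant, and then $\nabla_{\dot\gamma}\dot\gamma=0$, i.e. $s$ is affine. The main obstacle I expect is bookkeeping of signs: the sign in the identification $X^\sharp_o\leftrightarrow X_{\mathfrak m}$ (fundamental fields form an anti-homomorphism) and the sign convention in the quoted formula for $g(\nabla_{X^\sharp}X^\sharp,Z^\sharp)$. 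I would fix these by checking the formula on a naturally reductive example, where $\exp(sX)\cdot o$ must be a geodesic for every $X\in\mathfrak m$.
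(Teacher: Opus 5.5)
Your proposal is correct and follows essentially the route the paper indicates. The paper does not prove the lemma itself; it quotes it as the standard Kowalski--Vanhecke criterion right after recalling the formula for $g(\nabla_{X^\sharp}X^\sharp,Z^\sharp)$. Your two steps are exactly that standard argument: you reduce to the point $o$ using the isometries $\exp_{\mathfrak g}(\sigma W)$, and then derive $g_o(\nabla_{W^\sharp}W^\sharp,Z)=-\langle [W,Z]_{\mathfrak m},W_{\mathfrak m}\rangle$. The signs are consistent, as one can confirm independently from $\nabla_{W^\sharp}W^\sharp=-\tfrac12\,\mathrm{grad}\,|W^\sharp|^2$ for a Killing field, so your hedge about sign conventions is unnecessary. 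Your extra observation that taking $V=W_{\mathfrak m}$ forces $k=0$ for nonconstant orbits in the Riemannian setting is also correct. It agrees with the paper's later computation in $\mathrm{Nil}_3$, where $k\neq 0$ only yields $W\in\mathfrak h$.
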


\subsection{Homogeneous geodesics in $\mathrm{Nil}_3$}
Let us consider the Heisenberg group 
$\mathrm{Nil}_3=\mathrm{Osc}/\mathrm{U}(1)$ with 
reductive decomposition 
$\mathfrak{osc}=\mathfrak{u}(1)+\mathfrak{nil}_3$. 
As pointed out by \cite[Theorem 7.1]{TV},
the reductive decomposition 
$\mathfrak{osc}=\mathfrak{u}(1)+\mathfrak{nil}_3$ is 
\emph{not} naturally reductive. For completeness here we confirm this fact. 
With respect to the Lie subspace $\mathfrak{nil}_3$, the 
bilinear map $\mathsf{U}_{\mathfrak{nil}_3}$ define by \eqref{eq:U-tensor} is computed as (\cite{IKOS1}):
\[
\mathsf{U}_{\mathfrak{nil}_3}(E_1,E_3)=
\mathsf{U}_{\mathfrak{nil}_3}(E_3,E_1)=
-\frac{1}{2}E_{2},
\quad 
\mathsf{U}_{\mathfrak{nil}_3}(E_2,E_3)=
\mathsf{U}_{\mathfrak{nil}_3}(E_3,E_2)=
\frac{1}{2}E_1.
\]
For other combination of $i$, $j$, $\mathsf{U}_{\mathfrak{nil}_3}(E_i,E_j)=0$.
Hence $\mathfrak{osc}=\mathfrak{u}(1)+\mathfrak
{nil}_3$ is \emph{not} naturally reductive.

Represent a vector $W\in\mathfrak{osc}$ as
\[
W=W_1E_1+W_2E_2+W_3E_3+W_4E_4\in\mathfrak{osc}.
\]
Then we have
\[
[W,E_1]
=W_4E_2-W_2E_3,
\quad 
[W,E_2]
=-W_4E_1+W_1E_3,
\quad 
[W,E_3]=0,
\quad 
[W,E_4]=W_2E_1-W_1E_2.
\]
Hence the criterion is
\[
-W_2(W_3-W_4)=kW_1,
\quad 
W_1(W_3-W_4)=kW_2,
\quad
kW_3=0.
\]
In case $k=0$, we have 
\[
W_1(W_3-W_4)=0,
\quad
W_2(W_3-W_4)=0.
\]
It follows that
$W_4=W_3$ or $W_1=W_2=0$. 
Thus $W$ has one of the two forms:\\
either
\[
W=W_1E_1+W_2E_2+W_3(E_3+E_4),
\]
or 
\[
W=W_3E_3+W_4E_4.
\]
In the latter case, 
\[
\exp_{\mathfrak g}(sW)\cdot o=
\exp_{\mathfrak g}(s(W_3E_3))
\exp_{\mathfrak g}(s(W_4E_4))\cdot o=
\exp_{\mathfrak g}(s(W_3E_3))
\cdot o,
\]
since $W_4E_4$ is an element 
of the isotropy algebra at $o$. 

Next, in case $k\not=0$, we have 
$W_3=0$ and 
\begin{equation}\label{eq:7.2}
W_2W_4=kW_1,
\quad 
W_1W_4=-kW_2.
\end{equation}
From this we get
\begin{equation}\label{eq:7.3}
W_1(k^2+W_4^2)=0,\quad
W_2(k^2+W_4^2)=0.
\end{equation}
Since $k\neq0$, $k^2+W_4^2$ cannot be zero. This implies $W_1=W_2=0$.
Thus $W$ has the form $W=W_4E_4$.

As a conclusion, $\exp_{\mathfrak{osc}}(sW)\cdot o$ is a homogeneous 
geodesic in $\mathrm{Nil}_3$ up to reparametrization 
if and only if 
\[
W=W_1E_1+W_2E_2+W_3(E_3+E_4),\quad  W=W_3E_3+W_4E_4,
\quad \mbox{or}
\quad 
W=W_4E_4.
\]
In the third case, 
$\exp_{\mathfrak{osc}}(sW)\cdot o=o$, since $W$ is an element 
of the isotropy algebra at $o$. In the second case $W_3\neq0$.
In the first case, we know that 
$\exp_{\mathfrak{osc}}(s\{W_1E_1+W_2E_2+W_3(E_3+E_4)\})$ is a 
geodesic with affine parameter $s$ starting at the origin. 
This fact implies that 
$\mathrm{Nil}_3$ with the reductive 
decomposition 
$\mathfrak{osc}=\mathfrak{u}(1)+\mathfrak{m}$, where 
\[
\mathfrak{m}=\mathrm{span}\{E_1,E_2,E_3+E_4\}
\]
is a Riemannian g.~o.~space.

\begin{proposition}
In the reductive homogeneous Riemannian space 
$\mathrm{Nil}_3=\mathrm{Osc}/\mathrm{U}(1)$ with reductive 
decomposition $\mathfrak{osc}=\mathfrak{u}(1)+\mathfrak{nil}_3$,
$\exp_{\mathfrak{osc}}(sW)\cdot o$ is a pre-geodesic 
when and only when
\[
W=W_1E_1+W_2E_2+W_3(E_3+E_4),\quad 
\mbox{or} \quad W=W_3E_3+W_4E_4, \quad W_3\neq0.
\]
\end{proposition}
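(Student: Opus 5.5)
The plan is to apply the criterion of the preceding Lemma, taking the Lie subspace to be $\mathfrak{m}=\mathfrak{nil}_3=\mathrm{span}\{E_1,E_2,E_3\}$ with the orthonormal inner product, and reduce pre-geodesicity of $\exp_{\mathfrak{osc}}(sW)\cdot o$ to an algebraic system in the components of $W=W_1E_1+W_2E_2+W_3E_3+W_4E_4$.

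\textbf{Setting up the system.} Since $W_{\mathfrak m}=W_1E_1+W_2E_2+W_3E_3$, I will test \eqref{eq:homgeo} on $V=E_1,E_2,E_3$, using the brackets $[W,E_i]$ already listed.
\begin{itemize}
\item For $V=E_1$: $\langle W_4E_2-W_2E_3,W_{\mathfrak m}\rangle=W_2W_4-W_2W_3$, so $-W_2(W_3-W_4)=kW_1$.
\item For $V=E_2$: similarly $W_1(W_3-W_4)=kW_2$.
\item For $V=E_3$: $[W,E_3]=0$, so $kW_3=0$.
\end{itemize}
Thus $\exp(sW)\cdot o$ is a geodesic for some parameter exactly when these three equations hold for some constant $k$.

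\textbf{Solving the system.} I split according to whether $k$ vanishes.
\begin{itemize}
\item If $k=0$, the equations read $W_1(W_3-W_4)=W_2(W_3-W_4)=0$. Hence either $W_4=W_3$, giving $W=W_1E_1+W_2E_2+W_3(E_3+E_4)$, or $W_1=W_2=0$, giving $W=W_3E_3+W_4E_4$.
\item If $k\neq0$, then $W_3=0$. Multiplying the two remaining relations against each other gives $W_1(k^2+W_4^2)=W_2(k^2+W_4^2)=0$. Since $k^2+W_4^2>0$, we get $W_1=W_2=0$, so $W=W_4E_4$.
\end{itemize}
Conversely, each of these forms satisfies the system with the indicated $k$, so by the Lemma the corresponding orbit is a geodesic up to parametrization.

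\textbf{Discarding degenerate orbits.} A pre-geodesic must be a regular curve.
\begin{itemize}
\item $W=W_4E_4$ lies in the isotropy algebra $\mathfrak{u}(1)$, so $\exp(sW)\cdot o=o$ is constant and is excluded.
\item For $W=W_3E_3+W_4E_4$, $E_3$ is central in $\mathfrak{osc}$, so $\exp(sW)=\exp(sW_3E_3)\exp(sW_4E_4)$. Because $\exp(sW_4E_4)$ fixes $o$, the orbit is $\exp(sW_3E_3)\cdot o=(0,0,sW_3)$. This is a reparametrized Reeb flow, which is a geodesic by Proposition~\ref{prop:geo} with $c=\pm1$, and it is nonconstant exactly when $W_3\neq0$.
\item For $W=W_1E_1+W_2E_2+W_3(E_3+E_4)$ we have $k=0$, so $s$ is an affine parameter. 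The curve is regular because its initial velocity $W_{\mathfrak m}$ is nonzero whenever the curve is nonconstant.
\end{itemize}

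\textbf{Expected obstacle.} The main step needing care is the bookkeeping of degenerate cases rather than any hard estimate. One must check that the case $k\neq0$ yields nothing new, and that the isotropy component $W_4E_4$ can be split off because $E_3$ commutes with $E_4$. Once that is done, the two listed families are exactly the pre-geodesic orbits.
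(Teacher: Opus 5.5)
Your proposal is correct and follows the paper's argument essentially step by step: you test \eqref{eq:homgeo} on $E_1,E_2,E_3$ to get the same three equations, split into the cases $k=0$ and $k\neq0$, and discard both the isotropy case $W=W_4E_4$ and $W_3=0$ in the family $W_3E_3+W_4E_4$. One wording fix: literally multiplying $W_2W_4=kW_1$ and $W_1W_4=-kW_2$ only gives $W_1W_2(k^2+W_4^2)=0$, so instead multiply one relation by $W_4$ and substitute the other, which yields $W_1(k^2+W_4^2)=0$ and $W_2(k^2+W_4^2)=0$ as in \eqref{eq:7.3}.
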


Here we give an alternative proof of the following 
well-known fact. 
\begin{corollary}[\cite{Ma}]
\label{cor:Ma}
The homogeneous 
Riemannian space $\mathrm{Nil}_3=
\mathrm{Osc}/\mathrm{U}(1)$ with reductive decomposition 
$\mathfrak{osc}=\mathfrak{u}(1)+\mathfrak{m}$ is naturally reductive.
\end{corollary}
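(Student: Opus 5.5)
The plan is to verify directly that $\mathsf{U}_{\mathfrak m}=0$ for the Lie subspace $\mathfrak{m}=\mathrm{span}\{E_1,E_2,E_3+E_4\}$. One could instead invoke the preceding proposition (a reductive space is naturally reductive iff every geodesic through $o$ tangent to $X\in\mathfrak m$ is $\exp(sX)\cdot o$), but the direct computation is cleaner.

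First, check that $\mathfrak{osc}=\mathfrak{u}(1)+\mathfrak{m}$ is reductive. Clearly $\mathfrak{osc}=\mathbb{R}E_4\oplus\mathfrak m$ as vector spaces. Next, $[E_4,E_1]=E_2$, $[E_4,E_2]=-E_1$ and $[E_4,E_3+E_4]=0$, so $\mathrm{ad}(E_4)\mathfrak m\subset\mathfrak m$.

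Second, equip $\mathfrak m$ with the inner product transported from $T_o\mathrm{Nil}_3$. The map $\mathfrak m\to T_o\mathrm{Nil}_3$ is $X\mapsto X^\sharp_o$, and $E_4^\sharp$ vanishes at $o$. Hence $E_1,E_2,\bar E_3:=E_3+E_4$ correspond to $E_1,E_2,E_3$, and they are orthonormal.

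Next, compute the brackets and their $\mathfrak m$-components, using the decomposition $E_3=\bar E_3-E_4$:
\begin{itemize}
\item $[E_1,E_2]=E_3$, so $[E_1,E_2]_{\mathfrak m}=\bar E_3$;
\item $[E_1,\bar E_3]=[E_1,E_4]=-E_2$;
\item $[E_2,\bar E_3]=[E_2,E_4]=E_1$.
\end{itemize}
Thus in the basis $(E_1,E_2,\bar E_3)$ the map $[\cdot,\cdot]_{\mathfrak m}$ has the structure constants of $\mathfrak{su}(2)$: $[e_1,e_2]=e_3$, $[e_2,e_3]=e_1$, $[e_3,e_1]=e_2$.

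For these constants, each $\mathrm{ad}_{\mathfrak m}(Z)$ is skew-symmetric with respect to the orthonormal basis. Indeed $\langle [Z,X]_{\mathfrak m},Y\rangle$ is the triple product $\det(Z,X,Y)$, which is totally antisymmetric. Hence
\[
\langle X,[Z,Y]_{\mathfrak m}\rangle+\langle Y,[Z,X]_{\mathfrak m}\rangle=0,
\]
so $\mathsf U_{\mathfrak m}=0$ by \eqref{eq:U-tensor}.

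The main point requiring care is the identification of the inner product on $\mathfrak m$. It must be checked that $\bar E_3^\sharp|_o=E_3|_o$, i.e.\ that the $\mathfrak h$-component contributes nothing at $o$, and that the $\mathfrak m$-projection of $E_3$ is $\bar E_3$ rather than something else.

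As a consistency check, the earlier computation of homogeneous geodesics gives the same conclusion. For every $W=W_1E_1+W_2E_2+W_3\bar E_3\in\mathfrak m$, the curve $\exp(sW)\cdot o$ is a geodesic with affine parameter ($k=0$). By the polarization formula $2\mathsf U(X,Y)=\mathsf U(X+Y,X+Y)-\mathsf U(X,X)-\mathsf U(Y,Y)$, this again yields $\mathsf U_{\mathfrak m}=0$.
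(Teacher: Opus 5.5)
Your proof is correct, but its main line differs from the paper's. The paper does not compute $\mathsf{U}_{\mathfrak m}$ in its proof. Instead it reuses its earlier solution of the homogeneous-geodesic criterion relative to $\mathfrak{nil}_3$, which showed that every $W=W_1E_1+W_2E_2+W_3(E_3+E_4)$ satisfies \eqref{eq:homgeo} with $k=0$. From this it concludes that $\mathrm{Osc}/\mathrm{U}(1)$ is a Riemannian g.~o.~space, and then cites the Kowalski--Vanhecke theorem that $3$-dimensional g.~o.~spaces are naturally reductive. The direct check $\mathsf{U}_{\mathfrak m}=0$ appears only as a remark after the proof.

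Your computation identifies $[\cdot,\cdot]_{\mathfrak m}$ on the orthonormal basis $(E_1,E_2,E_3+E_4)$ with the $\mathfrak{su}(2)$ cross product, so every $\mathrm{ad}_{\mathfrak m}(Z)$ is skew. It is self-contained, needs no external classification theorem, and proves natural reductivity with respect to this specific $\mathfrak m$, which is what the statement asserts. By contrast, the Kowalski--Vanhecke citation on its own only guarantees \emph{some} naturally reductive Lie subspace. To pin it to $\mathfrak m$, one really needs the earlier proposition characterizing natural reductivity by the curves $\exp(sX)\cdot o$, $X\in\mathfrak m$, or equivalently your polarization of $\mathsf{U}_{\mathfrak m}(X,X)=0$. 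So your consistency check is the paper's argument with that citation replaced by a cleaner step.

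What the paper's route buys is motivation. There $\mathfrak m$ is not posited and then verified; it emerges from classifying the $W$ whose orbits are affinely parametrized geodesics. The same computation also feeds directly into Theorem~\ref{thm:Geodesic}.
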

\begin{proof}
Let us choose $\mathfrak{m}$ as the Lie subspace. 
Then any vector $W\in\mathfrak{m}$ has the form
$W=W_1E_1+W_2E_2+W_3(E_3+E_4)$.
Then, as we saw before, $\exp_{\mathfrak{osc}}(sW)\cdot o$ is a
geodesic. Thus $\mathrm{Osc}/\mathrm{U}(1)$ is a Riemannian g.~o.~space. 
Since $\dim \mathrm{Osc}/\mathrm{U}(1)=3$, Riemannian g.~o.~spaces are 
naturally reductive \cite{KV}.
\end{proof}
Note that one can check that the $U$-tensor $\mathsf{U}_{\mathfrak m}$ 
of the Lie subspace $\mathfrak{m}=\mathrm{span}\{E_1,E_2,E_3+E_4\}$ vanishes
by direct computation.

\begin{proposition}
For any nonzero vector $W=W_1E_1+W_2E_2+W_3E_3\in\mathfrak{nil}_3$,
the curve $
\gamma(s)=\exp_{\mathfrak{nil}_3}(sW)$ is a geodesic with affine parameter 
in $\mathrm{Nil}_3$ when and only when 
$W$ has the form
\[
W=W_1E_1+W_2E_2,\quad \mbox{or}\quad W=W_3E_3.
\]
In particular $\exp_{\mathfrak{nil}_3}(sW)$ is a unit speed geodesic 
if and only if 
\[
W=W_1E_1+W_2E_2,\quad W_1^2+W_2^2=1, \quad \mbox{or}
\quad W=\pm E_3.
\]
In the former case, $\gamma(s)$ is a Legendre geodesic. 
The geodesic $\gamma(s)$ is the Reeb flow in the latter case. 
\end{proposition}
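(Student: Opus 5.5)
The plan is to verify the geodesic equation directly along the curve $\gamma(s)=\exp_{\mathfrak{nil}_3}(sW)$, using left invariance. Since $\gamma$ is a one-parameter subgroup of $\mathrm{Nil}_3$, its velocity is the left invariant field $W$ restricted to $\gamma$: $\dot{\gamma}(s)=W_1E_1+W_2E_2+W_3E_3$ with constant coefficients. Hence
\[
\nabla_{\dot\gamma}\dot\gamma=\sum_{i,j}W_iW_j\nabla_{E_i}E_j .
\]
I will evaluate this with the table of the Levi-Civita connection of the Webster metric. The skew part cancels, since $\nabla_{E_1}E_2=-\nabla_{E_2}E_1$. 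The surviving symmetric terms are
\[
\nabla_{\dot\gamma}\dot\gamma
=2W_1W_3\nabla_{E_1}E_3+2W_2W_3\nabla_{E_2}E_3
=W_3(-W_1E_2+W_2E_1).
\]
Equivalently, this is $2\mathsf{U}_{\mathfrak{nil}_3}(W,W)$ with the $\mathsf{U}$-tensor computed above.

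The curve is a geodesic with affine parameter exactly when this vanishes, that is, when $W_3W_1=W_3W_2=0$. This gives two cases. Either $W_3=0$, so $W=W_1E_1+W_2E_2$, or $W_1=W_2=0$, so $W=W_3E_3$. Conversely, both forms make the acceleration zero. Note that the earlier criterion \eqref{eq:homgeo} with $V=E_4$ excluded is not directly usable here, because $\exp_{\mathfrak{nil}_3}$ is the subgroup $\mathrm{Nil}_3\subset\mathrm{Osc}$ with $W\in\mathfrak{nil}_3$. As a cross-check, this computation is the $k=0$ case of the preceding Proposition with $W_4=0$: $W_1W_3=W_2W_3=0$.

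For the unit speed statement, the speed is $|W|$ because $\{E_1,E_2,E_3\}$ is orthonormal. So the conditions become $W_1^2+W_2^2=1$ in the first case and $W_3=\pm1$ in the second. The first case has $\alpha(\dot\gamma)=W_3=0$, so the geodesic is Legendre. Indeed, the exponential formula gives $\gamma(s)=(W_1s,W_2s,0)$, which agrees with \eqref{eq:GeoL}. In the second case $\gamma(s)=(0,0,\pm s)$, which is the Reeb flow up to orientation, consistent with Proposition \ref{prop:geo}.

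There is no real obstacle. The only care needed is the identification $\dot\gamma=W$ along a one-parameter subgroup, and checking that the coordinate exponential formula matches the left invariant fields \eqref{eq:2.1}.
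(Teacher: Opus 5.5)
Your argument is correct and is essentially the paper's. The paper invokes the criterion $\mathsf{U}_{\mathfrak{nil}_3}(W,W)=0$, which is exactly your computation $\nabla_{\dot\gamma}\dot\gamma=\sum_{i,j}W_iW_j\nabla_{E_i}E_j=W_3(W_2E_1-W_1E_2)$ for the left invariant field $W$; you additionally spell out the unit speed, Legendre and Reeb assertions that the paper leaves implicit. Two side remarks are slightly off but not load-bearing: with the tabulated values one has $\nabla_{\dot\gamma}\dot\gamma=\mathsf{U}_{\mathfrak{nil}_3}(W,W)$, not $2\mathsf{U}_{\mathfrak{nil}_3}(W,W)$ (the paper's displayed formula for $\mathsf{U}_{\mathfrak{nil}_3}(W,W)$ has the same factor-of-two slip), and criterion \eqref{eq:homgeo} is in fact directly applicable, since $\exp_{\mathfrak{osc}}(sW)\cdot o=\exp_{\mathfrak{nil}_3}(sW)$ for $W\in\mathfrak{nil}_3$.
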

\begin{proof}
The curve $\exp_{\mathfrak{nil}_3}(sW)$ is 
geodesic if and only if $\mathsf{U}_{\mathfrak{nil}_3}(W,W)=0$ (see \cite{IKOS1}).
Since 
\[
\mathsf{U}_{\mathfrak{nil}_3}(W,W)=-\frac{W_1W_3}{2}E_2+\frac{W_2W_3}{2}E_1,
\]
$\exp_{\mathfrak{nil}_3}(sW)$ is a geodesic with affine parameter 
$s$ if and only if $W_3=0$ (Legendre) or $W_1=W_2=0$.
\end{proof}

\begin{theorem}\label{thm:Geodesic}
Every unit speed geodesic 
$\gamma(s)$ starting at the origin in $\mathrm{Nil}_3$ is 
$\mathrm{Osc}$-homogeneous and represented by
\[
\gamma(s)=\exp_{\mathfrak{osc}}(sW)\cdot o
\]
for some $W\in\mathfrak{m}$.
\end{theorem}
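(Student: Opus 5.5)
The plan is to combine the naturally reductive structure established in Corollary~\ref{cor:Ma} with uniqueness of geodesics. Identify $T_o\mathrm{Nil}_3$ with $\mathfrak{m}=\mathrm{span}\{E_1,E_2,E_3+E_4\}$ via the projection $\mathfrak{osc}\to\mathfrak{osc}/\mathfrak{u}(1)$. Since $E_4\in\mathfrak{h}$, the vector $E_3+E_4$ projects to $E_3$. Thus $W=aE_1+bE_2+c(E_3+E_4)\in\mathfrak{m}$ corresponds to the tangent vector $aE_1+bE_2+cE_3$ at $o$.

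Given a unit speed geodesic with $\dot\gamma(0)=aE_1+bE_2+cE_3$, $a^2+b^2+c^2=1$, we take this $W$. From the computation preceding the last proposition (the criterion \eqref{eq:homgeo} with $k=0$ holds for every $W$ with $W_3=W_4$), $\exp_{\mathfrak{osc}}(sW)\cdot o$ is a geodesic with affine parameter $s$. Its initial velocity is the projection of $W$, namely $\dot\gamma(0)$, so it has unit speed. By uniqueness of geodesics with given initial data, $\gamma(s)=\exp_{\mathfrak{osc}}(sW)\cdot o$.

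The one point needing care is that the initial velocity of the orbit really is $aE_1+bE_2+cE_3$ under the given isometric action. I would verify this directly from the explicit action formula of $(a,b,c,e^{it})$ on $(x,y,z)$. The fundamental vector field of $E_4$ vanishes at $o$, and those of $E_1,E_2,E_3$ are $\partial_x,\partial_y,\partial_z$ at the origin, which coincide with the left invariant $E_1,E_2,E_3$ there. This makes the identification consistent.

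As an optional sanity check, which I expect to be the most laborious part, I would compute $\exp(sW)$ explicitly and compare with Proposition~\ref{prop:geo}. Writing $\exp(sW)=\mathsf{M}(x(s),y(s),z(s),cs)$ and using the left invariant fields \eqref{eq:3.2}, the curve satisfies
\[
\dot x=a\cos(cs)-b\sin(cs),\quad \dot y=a\sin(cs)+b\cos(cs),\quad \dot z=c+\tfrac12(\dot y x-\dot x y),
\]
with $\dot t=c$. Acting on $o$ gives the point $(x(s),y(s),z(s))$. Integrating these equations reproduces \eqref{eq:Geo} for $c\neq0$ and \eqref{eq:GeoL} for $c=0$. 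This confirms the result independently of the abstract argument.
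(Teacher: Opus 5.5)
Your proof is correct, but your main argument differs from the paper's. The paper argues by explicit computation. For $W=W_1E_1+W_2E_2+W_3(E_3+E_4)$ with $W_3\neq0$ it writes $\exp_{\mathfrak{osc}}(sW)\cdot o=\exp(sW)\mathsf{M}(0,0,0,-W_3s)$ and computes the coordinates \eqref{eq:homGeo}. After setting $W_1=a$, $W_2=b$, $W_3=c$, these coincide with the known geodesic \eqref{eq:Geo}. The Legendre case $c=0$ is treated separately with $W=aE_1+bE_2$ and compared with \eqref{eq:GeoL}.

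You instead use three ingredients:
\begin{enumerate}
\item The criterion \eqref{eq:homgeo} holds with $k=0$ whenever $W_3=W_4$, i.e.\ natural reductivity with respect to $\mathfrak{m}$, which the paper already establishes (Corollary~\ref{cor:Ma}).
\item The orbit's initial velocity is $aE_1+bE_2+cE_3$. Your check from the action formula is right: $E_4^{\sharp}$ vanishes at $o$, and $E_1^{\sharp},E_2^{\sharp},E_3^{\sharp}$ at $o$ are $\partial_x,\partial_y,\partial_z$, which agree with the left invariant frame there.
\item Uniqueness of geodesics with given initial data.
\end{enumerate}

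This route is shorter, needs no case split between $c\neq0$ and $c=0$, and does not use Proposition~\ref{prop:geo}. It also shows that the theorem is just the ``only if'' direction of the characterization of natural reductivity, applied to Corollary~\ref{cor:Ma}.

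What the paper's route buys is the closed-form orbit \eqref{eq:homGeo}. This verifies the criterion independently of the abstract lemma. The same explicit template is reused for Theorem~\ref{thm:7.1} and the comparison in Theorem~\ref{thm:Main}, although your uniqueness argument would work there too once the criterion for the Lorentz equation is available.

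Your ``optional sanity check'' is close in spirit to the paper's proof, but it compares first-order ODEs rather than closed-form solutions. Your system for $(x,y,z)$ is exactly the one the paper derives for geodesics with $\cos\theta=c$, with the same initial data. So that check is already a complete proof without integrating anything.
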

\begin{proof}
Let $\gamma(s)=(x(s),y(s),z(s))$ be a unit speed geodesic 
in $\mathrm{Nil}_3$ starting at the origin $(0,0,0)$ with 
initial velocity $\dot{\gamma}(0)=aE_1
+bE_2+cE_3$. In case $c\not=0$, 
the parametric equation 
of $\gamma(s)$ is given by
\eqref{eq:Geo}. 

On the other hand, take a vector $W=W_1E_1+W_2E_2+W_3(E_3+E_4)$ with 
$W_3\not=0$, then the orbit 
$\exp_{\mathfrak{osc}}(sW)\cdot o$ is 
computed by
\[
\exp_{\mathfrak{osc}}(sW)\cdot o=\exp(sW)\mathsf{M}(0,0,0,-W_3s).
\]
Note that $\exp_{\mathfrak{osc}}$ denotes the 
exponential map $\exp_{\mathfrak{osc}}:
\mathfrak{osc}\to \mathrm{Osc}$. 
On the other hand $\exp$ of the right hand side 
denotes the matrix exponential map  
$\exp:\mathfrak{gl}_4\mathbb{R}\to\mathrm{GL}_4\mathbb{R}$.

The coordinate functions of $\exp_{\mathfrak{osc}}(sW)\cdot o$ are 
given by
\begin{equation}\label{eq:homGeo}
\left\{
\begin{array}{l}
x(s)=\frac{1}{W_3}
(-W_2+W_1\sin(W_3s)+W_2\cos(W_3s)\,),
\\
y(s)=\frac{1}{W_3}
(W_1-W_1\cos(W_3s)+W_2\sin(W_3s)\,),
\\
z(s)=
\frac{1}{2W_3^2}\left(
-(W_1^2+W_2^2)\sin(W_3s)+(W_3s)
(W_1^2+W_2^2+2W_3^2)\,
\right).
\end{array}
\right.
\end{equation}
If we change the notation as
\[
W_1=a,\quad W_2=b,\quad W_3=c=\cos\theta,
\]
then the system \eqref{eq:homGeo} coincides with \eqref{eq:Geo}.

Next, let $\gamma(s)$ be a unit speed Legendre geodesic. 
Then $\gamma(s)$ is parametrized as \eqref{eq:GeoL}. 
On the other hand, set $W=aE_1+bE_2$, then we get
\[
\exp_{\mathfrak{osc}}(sW)=(as,bs,0)=\gamma(s).
\]
Thus every unit speed geodesic starting at the origin is 
$\mathrm{Osc}$-homogeneous.
\end{proof}

\subsection{Homogeneous magnetic geodesics}
For contact magnetic geodesics, we need to compute 
the Lorentz equation for 
$\gamma(s)=\exp_{\mathfrak{osc}}(sW)\cdot o$ and deduce the criterion.

For any vector $W=W_{\mathfrak h}+W_{\mathfrak m}\in\mathfrak{osc}=\mathfrak{u}(1)+\mathfrak{m}$, 
$\gamma(s)=\exp_{\mathfrak{osc}}(sW)\cdot o$ is a contact magnetic 
geodesic if and only if 
\[
[W_{\mathfrak h},W_{\mathfrak m}]
=qJ W_{\mathfrak m},
\]
since $\mathrm{Osc}/\mathrm{U}(1)$ is naturally reductive with respect to 
$\mathfrak{m}$.
 
Let us determine all the homogeneous contact magnetic geodesics.
Note that for $W=W_1E_1+W_2E_2+W_3E_3+W_4E_4$, we have the splitting 
$W=W_{\mathfrak h}+W_{\mathfrak m}$, where 
\[
W_{\mathfrak m}=W_1E_1+W_2E_2+W_3(E_3+E_4),
\quad 
W_{\mathfrak h}=(W_4-W_3)E_4.
\]
Hence
\[
[W_{\mathfrak h},W_{\mathfrak m}]
=[(W_4-W_3)E_4,W_1E_1+W_2E_2+W_3(E_3+E_4)]
=(W_4-W_3)(-W_2E_1+W_1E_2).
\]
On the other hand,
\[
J W_{\mathfrak m}=
-W_2E_1+W_1E_2.
\]
Hence the Lorentz equation is the system
\[
-W_2(W_4-W_3)=-qW_2,
\quad 
W_1(W_4-W_3)=qW_1,
\]
equivalently,
\[
W_1\{q+(W_3-W_4)\}
=W_2\{q+(W_3-W_4)\}=0.
\]
If $q+(W_3-W_4)\neq0$ we must have $W_1=W_2=0$, namely $W=W_3E_3+W_4E_4$.
Since $[E_3,E_4]=0$ we get
$$
\gamma(s)=\exp_\mathfrak{osc}(sW).o=\exp_\mathfrak{osc}(s(W_3E_3))\cdot
\exp_\mathfrak{osc}(s(W_4E_4)).o=\exp_\mathfrak{osc}(s(W_3E_3)).o,
$$
which is a geodesic (Reeb flow).

Therefore, we take $q+(W_3-W_4)=0$.
Hence we obtain
\[
W=W_1E_1+W_2E_2+W_3(E_3+E_4)+qE_4.
\]
Thus 
\[
\gamma(s)=\exp_{\mathfrak{osc}}(sW)\cdot
o=\exp_{\mathfrak{osc}}
\left(
s\{W_{\mathfrak{m}}+qE_4\,
\}
\right)\cdot o.
\]
In other words, $\exp_{\mathfrak{osc}}(sW)\cdot
o$ is a homogeneous contact magnetic geodesic 
if and only if $W_{\mathfrak h}=qE_4$. 

\begin{theorem}\label{thm:7.1}
Every homogeneous contact magnetic geodesic starting at 
the origin $o$ of $\mathrm{Nil}_3$ is expressed as
\[
\gamma(s)=\exp_{\mathfrak{osc}}
\left(
s\{W+qE_4\,
\}
\right)\cdot o
\] 
for some $W=W_1E_1+W_2E_2+W_3(E_3+E_4)\in\mathfrak{m}$. 
In case $W_3+q\not=0$, the parametric equation of $\gamma(s)$ is given by 
\begin{align*}
x(s)=&\frac{1}{W_3+q}
(-W_2+W_1\sin((W_3+q)s)+W_2\cos((W_3+q)s)\,),
\\
y(s)=&\frac{1}{W_3+q}
(W_1-W_1\cos((W_3+q)s)+W_2\sin((W_3+q)s)\,),
\\
z(s)=&
\frac{1}{2(W_3+q)^2}\left(
-(W_1^2+W_2^2)\sin((W_3+q)s)+(W_3+q)
(W_1^2+W_2^2+2W_3^2+2qW_3)s\,
\right).
\end{align*}
In case $W_3+q=0$, 
\[
x(s)=W_1\,s,
\quad 
y(s)=W_2\, s,
\quad 
z(s)=-qs.
\]
\end{theorem}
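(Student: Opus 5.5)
The first assertion has essentially been established in the computation just before the statement. Because $\mathrm{Osc}/\mathrm{U}(1)$ is naturally reductive with respect to $\mathfrak{m}=\mathrm{span}\{E_1,E_2,E_3+E_4\}$ (Corollary \ref{cor:Ma}), the orbit $\exp_{\mathfrak{osc}}(sW)\cdot o$ is a contact magnetic geodesic exactly when $[W_{\mathfrak h},W_{\mathfrak m}]=qJW_{\mathfrak m}$. I would split $W=W_{\mathfrak h}+W_{\mathfrak m}$ and use the bracket computation above to reduce this to $W_1\{q+W_3-W_4\}=W_2\{q+W_3-W_4\}=0$. There are then two cases.
\begin{itemize}
\item If $W_1=W_2=0$, the orbit is the Reeb flow $\exp_{\mathfrak{osc}}(sW_3E_3)\cdot o$. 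This is of the stated form with $W_1=W_2=0$, since the $E_4$-component acts trivially at $o$ and commutes with $E_3$.
\item Otherwise $W_{\mathfrak h}=qE_4$, which gives the representation $\exp_{\mathfrak{osc}}(s\{W+qE_4\})\cdot o$ with $W\in\mathfrak{m}$.
\end{itemize}

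For the parametrization, I would write $X:=W+qE_4=W_1E_1+W_2E_2+W_3E_3+(W_3+q)E_4$ and set $\lambda=W_3+q$. The plan is to compute $\exp(sX)$ as a $4\times 4$ matrix and read off $x,y,z$ from its last column. The $E_4$-block is the rotation generator $\begin{pmatrix}0&-\lambda\\ \lambda&0\end{pmatrix}$ acting on the middle coordinates. So it is cleanest to solve the linear ODE $\dot v=Xv$ with $v(0)=e_4=(0,0,0,1)^T$, since $\exp(sX)\cdot o$ is encoded by $\exp(sX)e_4$. Concretely, I would decompose $\exp(sX)=\mathsf{M}(x(s),y(s),z(s),\lambda s)$, so that $\exp(sX)\cdot o=(x(s),y(s),z(s))$.

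The last column gives $\dot x=-\lambda y'+\ldots$ type equations. More precisely, writing $v=(2z,\,x,\,y,\,1)$, we get $(\dot x,\dot y)=(W_1,W_2)+\lambda(-y,x)$ and $2\dot z=-W_2x+W_1y+2W_3$, all with zero initial data.
\begin{itemize}
\item The planar equation is a rotation ODE. For $\lambda\ne0$ its solution is $(x,y)=\lambda^{-1}(R_{\lambda s}-I)J_0^{-1}(W_1,W_2)$, where $R_{\lambda s}$ is rotation by $\lambda s$ and $J_0$ is the planar generator; this yields exactly the stated $x(s)$ and $y(s)$.
\item Substituting $x,y$ into $\dot z$ and integrating gives $z(s)$: the terms $W_1y-W_2x$ combine to $\lambda^{-1}(W_1^2+W_2^2)(1-\cos\lambda s)$.
\end{itemize}
As a consistency check I would compare with Theorem \ref{thm:Geodesic} at $q=0$, which should recover \eqref{eq:homGeo}.

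For $\lambda=0$, the planar equation degenerates to $(x,y)=(W_1s,W_2s)$. Then $\dot z=W_3$, because $W_1y-W_2x=0$, and $W_3=-q$ gives $z=-qs$.

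The main obstacle is bookkeeping. One must make sure the matrix entry conventions of $m(x,y,z,t)$ (the factor $2z$ and the signs in the first row) translate correctly into the $z$-equation, so that the coefficient $W_1^2+W_2^2+2W_3^2+2qW_3$ comes out right. I would verify this with a short expansion to second order in $s$.
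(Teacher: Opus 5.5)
Your proposal is correct and takes the paper's route. The paper also gets the representation from the criterion $[W_{\mathfrak h},W_{\mathfrak m}]=qJW_{\mathfrak m}$, and it obtains the coordinates by computing $\exp(s(W+qE_4))\,\mathsf{M}(0,0,0,-(W_3+q)s)=\mathsf{M}(x(s),y(s),z(s),0)$; the last column of that matrix is the vector $\exp(sX)e_4$ you get from the linear ODE. Your explicit handling of the Reeb-flow case and of $W_3+q=0$, which the paper leaves implicit, checks out, and so does your integration, since $\lambda(W_1^2+W_2^2)+2\lambda^2W_3=\lambda(W_1^2+W_2^2+2W_3^2+2qW_3)$.
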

\begin{proof}
In case $W_3+q\not=0$, the homogeneous contact magnetic geodesic 
$\exp_{\mathfrak{osc}}(s(W+q E_4))\cdot o$ is 
computed as 
\[
\exp_{\mathfrak{osc}}(s(W+qE_4))\cdot o=
\exp(s(W+qE_4))\mathsf{M}(0,0,0,-(W_3+q)s).
\]
\end{proof}

Now we arrive at the main result of this article:

\begin{theorem}\label{thm:Main}
Every contact magnetic geodesic 
$\gamma(s)$ starting at the origin in $\mathrm{Nil}_3$ is 
$\mathrm{Osc}$-homogeneous and represented by
\[
\gamma(s)=\exp_{\mathfrak{osc}}(s(W+qE_4))\cdot o
\]
for some $W\in\mathfrak{m}$.
\end{theorem}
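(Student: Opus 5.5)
The plan is to match the explicit solutions of the Lorentz equation from Proposition~\ref{prop:mag} (with $x_0=y_0=z_0=0$) against the explicit orbits of Theorem~\ref{thm:7.1}. We use uniqueness of solutions of the second order ODE~\eqref{eq:1.1}. By Theorem~\ref{thm:7.1}, every curve $\exp_{\mathfrak{osc}}(s(W+qE_4))\cdot o$ with $W\in\mathfrak{m}$ is already a contact magnetic geodesic. It therefore suffices to show that for each unit initial vector $aE_1+bE_2+cE_3$ at $o$ there is some $W\in\mathfrak{m}$ whose orbit has this initial velocity. The orbit then solves the same initial value problem and coincides with $\gamma$.

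The velocity at $s=0$ is the natural guess. The orbit $\exp_{\mathfrak{osc}}(s(W+qE_4))\cdot o$ has initial velocity equal to the image of $W+qE_4$ under the projection $\mathfrak{osc}\to T_o\mathrm{Nil}_3$, which kills the isotropy part $\mathfrak{u}(1)=\mathbb{R}E_4$. Writing $W=W_1E_1+W_2E_2+W_3(E_3+E_4)$, this image is $W_1E_1+W_2E_2+W_3E_3$. So we choose
\[
W_1=a,\quad W_2=b,\quad W_3=c=\cos\theta .
\]
Then $W_3+q=c_q$.

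As a safeguard against sign or normalization issues in the identification of $\mathfrak{m}$ with $T_o\mathrm{Nil}_3$, I would confirm the choice directly by differentiating the formulas of Theorem~\ref{thm:7.1} at $s=0$.
\begin{itemize}
\item If $c_q\neq0$, one gets $\dot x(0)=W_1$ and $\dot y(0)=W_2$. For the $z$-component,
\[
\dot z(0)=\frac{-(W_1^2+W_2^2)+(W_1^2+W_2^2+2W_3^2+2qW_3)}{2(W_3+q)}=W_3 .
\]
Since $x(0)=y(0)=0$, the $E_3$-component of $\dot\gamma(0)$ equals $\dot z(0)=W_3$, as required.
\item If $c_q=0$, one gets $\dot z(0)=-q=W_3$, again as required.
\end{itemize}
Unit speed holds automatically, since $a^2+b^2+c^2=1$.

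For extra robustness, one can compare coordinates directly. Substituting $W_1=a$, $W_2=b$, $W_3=c$ into Theorem~\ref{thm:7.1} reproduces $x(s)$ and $y(s)$ of~\eqref{eq:mag} with $x_0=y_0=0$. In the case $c_q=0$ it reproduces~\eqref{eq:mag0}, namely $(as,bs,-qs)$. The $z$-formula printed in Proposition~\ref{prop:mag} appears garbled, so I would not rely on it. Instead I would integrate
\[
\dot z=c-\tfrac12(\dot x y-x\dot y)
\]
exactly as in the geodesic computation, replacing $c$ by $c_q$ in the frequencies. This uses $\dot x^2+\dot y^2=1-c^2$, and the same manipulation as before gives
\[
z(s)=cs+\frac{(1-c^2)s}{2c_q}-\frac{1-c^2}{2c_q^2}\sin(c_qs).
\]
Rewriting $cs+\frac{(1-c^2)s}{2c_q}$ as $\frac{(W_1^2+W_2^2+2W_3^2+2qW_3)\,s}{2c_q}$ shows that this agrees with Theorem~\ref{thm:7.1}.

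The main obstacle is not conceptual but bookkeeping. One must check that the $E_3$-component of the orbit's velocity is exactly $W_3$ and not $W_3+q$; the $qE_4$ term could plausibly shift the apparent frequency and the $z$-drift. The coefficient $2W_3^2+2qW_3$ in Theorem~\ref{thm:7.1} is what resolves this, and the uniqueness argument above sidesteps the need for a full coordinate comparison.
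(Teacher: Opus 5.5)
Your proof is correct and picks the same vector as the paper ($W_1=a$, $W_2=b$, $W_3=\cos\theta$), but the logic differs.

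The paper argues purely by coordinates. It places the explicit solution of Proposition~\ref{prop:mag} next to the explicit orbit of Theorem~\ref{thm:7.1} and observes that they agree. Your main argument instead uses uniqueness for the second-order Lorentz initial value problem, so only the data at $s=0$ must be matched. This frees you from needing the full solution of the Lorentz equation. In particular you avoid the $z$-formula of Proposition~\ref{prop:mag}, whose printed form is indeed incomplete. Your recomputed $z(s)=cs+\frac{(1-c^2)s}{2c_q}-\frac{1-c^2}{2c_q^2}\sin(c_qs)$ is correct and agrees with Theorem~\ref{thm:7.1}.

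The price is that you need the converse of what Theorem~\ref{thm:7.1} actually states: that every orbit $\exp_{\mathfrak{osc}}(s(W+qE_4))\cdot o$ with $W\in\mathfrak{m}$ solves the Lorentz equation for all $s$. The paper supplies this in the discussion before the theorem, where the orbit is shown to be a contact magnetic geodesic if and only if $W_{\mathfrak h}=qE_4$. That derivation rests on the sign-sensitive formula for $\nabla_{X^{\sharp}}X^{\sharp}$ at $o$. It also uses the unstated $\mathrm{Osc}$-invariance of $d\alpha$, which is what lets the condition at $o$ propagate along the orbit.

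If you want to be independent of that machinery, differentiate the formulas of Theorem~\ref{thm:7.1} directly. You get $\ddot x=-(W_3+q)\dot y$, $\ddot y=(W_3+q)\dot x$, and $\dot z+\frac12(\dot x y-x\dot y)\equiv W_3$, which is exactly the Lorentz system with $\cos\theta=W_3$. The paper's coordinate comparison never needs this direction: once the coordinates coincide, $\gamma$ is an orbit by definition.
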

\begin{proof}
Let $\gamma(s)$ be a contact magnetic geodesic 
in $\mathrm{Nil}_3$ starting at the origin with initial velocity 
$\dot{\gamma}(0)=aE_1+bE_2+cE_3$ and set
\[
W_1=a,\quad W_2=b,\quad W_3=\cos\theta,
\]
where $\theta$ is the constant contact angle. 
Then the parametric equation of $\gamma(s)$ is 
given by Proposition \ref{prop:mag}. 
Comparing the system \eqref{eq:mag} and 
\eqref{eq:mag0} with Theorem \ref{thm:7.1}, 
one can confirm that $\gamma(s)$ coincides with
\[
\exp_{\mathfrak{osc}}(s(W+qE_4))\cdot o,
\]
where $W=W_1E_1+W_2E_2+W_3(E_3+E_4)\in\mathfrak{m}$.
\end{proof}

\bigskip

{\bf Author contributions.} 
All authors contributed equally in conception, design and preparation of the manuscript.

{\bf Funding.} The first named author is partially supported by JSPS KAKENHI JP23K03081.
The second named author was partially supported by the project (EXCELENT-UAIC), 
code CNFIS-FDI-2025-F-0318 of the University Alexandru Ioan Cuza of Iasi.

\bibliographystyle{amsplain}

\end{document}